\newtheorem{thm}{Theorem}[section]
\newtheorem{lem}[thm]{Lemma}
\newtheorem{alg}[thm]{Algorithm}
\newtheorem{ass}[thm]{Assumption}  
\newtheorem{defn}[thm]{Definition}
\newtheorem{rem}[thm]{Remark}
\numberwithin{equation}{section}
\renewcommand{\d}{\mathrm{d}}
\newcommand{\R}{\mathbb{R}}
\newcommand{\Y}{\mathbb{Y}}
\newcommand{\V}{\mathbb{V}}
\newcommand{\Vj}{\mathcal{V}}
\newcommand{\Proba}{\mathbb{P}}
\newcommand {\E} { {\mathbb E} }
\newcommand{\Sph}{\mathbb{S}}
\newcommand{\eps}{\epsilon}
\newcommand{\ph}{\varphi}
\newcommand{\e}{{\rm e}}
\newcommand{\M}{\mathcal M}
\newcommand{\dps}{\displaystyle}
\newcommand{\system}[1]{\left\{ \begin{alignedat}{2}#1\end{alignedat}\right.}
\newcommand{\emat}{\end{pmatrix}}
\newcommand{\abs}[1]{\left | #1\right |}
\newcommand{\set}[1]{\left\{#1\right\}}
\newcommand{\pare}[1]{ \left(#1\right) }
\newcommand{\norm}[1]{\left\Vert#1\right\Vert}
\DeclareMathOperator{\Id}{{\rm Id}}
\DeclareMathOperator{\var}{{\rm var}}
\renewcommand{\div}{ {\rm div}}
\newcommand{\disc}[1]{#1^{\delta t}}
\newcommand{\one}{{\bf 1 }}
\newcommand{\bigo}{\mathcal{O}}
\newcommand{\smallo}{o}
\newcommand{\Max}{\mathcal M}
\newcommand{\bt}{{\bar{t}}}
\newcommand{\taueps}{{\tau_{\eps}}}
\newcommand{\Feps}{{F_{\eps}}}
\newcommand{\dtini}{\overline{\delta t}_{ri}}
\newcommand{\nl}{ {\rm nl} }
\newcommand{\nlbis}{ {\rm nl_{2}} }
\begin{document}

\title{Simulating individual-based models of bacterial chemotaxis with asymptotic variance reduction}

\author{Mathias Rousset\thanks{SIMPAF, INRIA Lille - Nord Europe, Lille, France, (mathias.rousset@inria.fr).}  
\;\;\and Giovanni Samaey\thanks{Department of Computer Science, K.U. Leuven,
Celestijnenlaan 200A, 3001 Leuven, Belgium, (giovanni.samaey@cs.kuleuven.be).}}

\pagestyle{myheadings} \markboth{Variance reduced simulation of chemotaxis}{Rousset and Samaey}\maketitle

\begin{abstract}
We discuss variance reduced simulations for an individual-based model of chemotaxis of bacteria with internal dynamics. The variance reduction is achieved via a coupling of this model with a simpler process in which the internal dynamics has been replaced by a direct gradient sensing of the chemoattractants concentrations. In the companion paper \cite{limits}, we have rigorously shown, using a pathwise probabilistic technique, that both processes converge towards the same advection-diffusion process in the diffusive asymptotics. In this work, a direct coupling is achieved between paths of individual bacteria simulated by both models, by using the same sets of random numbers in both simulations. This coupling is used to construct a hybrid scheme with reduced variance. We first compute a deterministic solution of the kinetic density description of the direct gradient sensing model; the deviations due to the presence of internal dynamics are then evaluated via the coupled individual-based simulations. We show that the resulting variance reduction is \emph{asymptotic}, in the sense that, in the diffusive asymptotics, the difference between the two processes has a variance which vanishes according to the small parameter.  
\end{abstract}
{\bf MSC}: 35Q80, 92B05, 65C35. \\
{\bf Key words}: asymptotic variance reduction, bacterial chemotaxis, velocity-jump process.


\section{Introduction}

The motion of flagellated bacteria can be modeled as a sequence of run phases, during which a bacterium moves in a straight line at constant speed. Between two run phases, the bacterium changes direction in a tumble phase, which takes much less time than the run phase and acts as a reorientation. To bias movement towards regions with high concentration of chemoattractant, the bacterium adjusts its turning rate by increasing, resp.~decreasing, the probability of tumbling when moving in an unfavorable, resp.~favorable, direction \cite{Alt:1980p8992,Stock:1999p8984}. Since many species are unable to sense chemoattractant gradients reliably due to their small size, this adjustment is often done via an intracellular mechanism that allows the bacterium to retain information on the history of the chemoattractant concentrations along its path \cite{Bren:2000p7499}. The resulting model, which will be called the ``internal state''  or ``fine-scale'' model in this text, can be formulated as a velocity-jump process, combined with an ordinary differential equation (ODE) that describes the evolution of an internal state that incorporates this memory effect \cite{Erban:2005p4247}. 


The probability density distribution of the velocity-jump process evolves according to a kinetic equation, in which the internal variables appear as additional dimensions.  A direct deterministic simulation of this equation is therefore prohibitively expensive, and one needs to resort to a stochastic particle method. Unfortunately, a direct fine-scale simulation using stochastic particles presents a large statistical variance, even in the diffusive asymptotic regime, where the behavior of the bacterial density is known explicitly to satisfy a Keller-Segel advection-diffusion equation. Consequently, it is difficult to assess accurately how the solutions of the fine-scale model differ from their advection-diffusion limit in intermediate regimes. We refer to \cite{Horst1,Horst2,KelSeg70} for numerous historical references on the Keller-Segel equation, to \cite{ErbOthm04,Erban:2005p4247} for formal derivations (based on moment closures) of convergence of the the velocity-jump process with internal state to the Keller-Segel equation, and to the companion paper \cite{limits} for a proof of this convergence based on probabilistic arguments.  

In this paper, we propose and analyze a numerical method to simulate  individual-based models for chemotaxis of bacteria with internal dynamics with reduced variance. The variance reduction is based on a coupling technique (control variate): the main idea is to simultaneously simulate, using the same random numbers, a simpler, ``coarse'' process where the internal dynamics is replaced by a direct ``gradient sensing'' mechanism (see \cite{Alt:1980p7984,Othmer:1988p7986,Patlak:1953p7738} for references on such gradient sensing models). The probability density of the latter satisfies a kinetic equation without the additional dimensions of the internal state, and converges to a similar advection-diffusion limit, see e.g.~\cite{Chalub:2004p7641,Hillen:2000p4751,Othmer:2002p4752,limits}.

More precisely, we consider the coupling of two velocity-jump processes with exactly the same advection-diffusion limit (for both processes, a proof of the latter convergence has been obtained in the companion paper~\cite{limits}). The first one is the process with internal dynamics described by the system of equations~\eqref{eq:process_noscale}, with jump rates satisfying \eqref{eq:lin_rate} and internal dynamics satisfying the two main assumptions \eqref{eq:tau_eps_def} and \eqref{eq:tau_scales} as well as other technical assumptions contained in Section~\ref{sec:asympt}. The second one is the process with direct gradient sensing~\eqref{eq:cprocess}, with jump rates satisfying~\eqref{eq:control_rate} with~\eqref{eq:Adef}. The same random numbers are used for the two processes in the definition of both the jump times and the velocity directions.  The main contributions of the present paper are then twofold~:
\begin{itemize}
\item From a numerical point of view, we couple two systems of many particles consisting of different realizations of the fine-scale (internal state) and coarse (gradient sensing) processes, simulated with similar discretization schemes (see Section~\ref{sec:coupl_princ}) and the same random numbers. Additionally, for the coarse model, also the continuum description is simulated on a spatial grid. Evolution of the fine-scale model is then evaluated on the grid at hand by adding the difference between the two coupled particle descriptions to the evolution of the coarse continuum description. This can be seen as using the coarse process as a control variate. As coupling is lost over time, a regular reinitialization of the two coupled particle systems is included in the algorithm. The asymptotic variance reduction method can be depicted using Table~\ref{tab:table} (see also Figure~\ref{fig:pic} in Section~\ref{sec:method}).
\begin{table}\label{tab:table}
\begin{center}
  \begin{tabular}{|l|l|}
\hline
Model & Numerics \\
\hline
Internal state model--Individual description & Monte-Carlo method \\
Gradient sensing model--Individual description & Monte-Carlo method (coupled)\\
Gradient sensing model--Density description & Grid method \\
\hline
\end{tabular}
\end{center}
\caption{Models and numerical methods used in the variance reduction algorithm.}
\end{table}
	\item From an analysis point of view, we rigorously prove, using probabilistic arguments, $L^p$-bounds on the position difference at diffusive times of both processes in terms of appropriate powers of the scaling parameter (Theorem~\ref{thm:coupling}). This analysis shows that the numerical variance reduction obtained by the method is \emph{asymptotic} (the remaining statistical variance vanishes with the small scaling parameter), and provides upper bounds on the rate of the latter convergence. We argue in Section~\ref{sec:sharp} that this bound should be sharp in relevant regimes.
\end{itemize}

The idea of asymptotic variance reduction is a general and very recent idea in scientific computing that appears when using hybrid Monte-Carlo/PDE (partial differential equation) methods. While the use of control variates is fairly common in Monte Carlo simulation, the only explicit attempt to develop asymptotic variance reduction in hybrid methods, up to our knowledge, can be found in \cite{DimPar08} and related papers in the context of the Boltzmann equation, and is based on an importance sampling approach (see also~\cite{10.1063/1.1899210} and related papers). 

This paper is organized as follows. In Section \ref{sec:model}, we briefly review the mathematical models that we will consider, and summarize the results on their advection-diffusion limit that were obtained in the companion paper \cite{limits}. In Section \ref{sec:method}, we proceed to describe the coupled numerical method in detail. 
Section \ref{sec:variance} is dedicated to a detailed analysis of the variance reduction of the coupled method. The main result is the following: up to a time shift perturbation, the variance of the difference between the two coupled process scales according to the small parameter of the diffusive asymptotics.
We proceed with numerical illustrations in Sections \ref{sec:illustr} and \ref{sec:appl}, and conclude in Section \ref{sec:concl} with an outlook to future research.

\section{Particle-based models for bacterial chemotaxis \label{sec:model}}

In this paper, we directly present the models of interest in a nondimensional form, incorporating the appropriate space and time scales.  For a  discussion on the chosen scaling, we refer to the companion paper \cite{limits}.

\subsection{Model with internal state\label{sec:model-internal}}
We consider bacteria that are sensitive to the concentration of $m$ chemoattractants $\pare{\rho_i(x)}_{i=1}^m$, with $\rho_i(x) \geq 0$ for $x \in \R^d$. While we do not consider time dependence of chemoattractant via production or consumption  by the bacteria, a generalization to this situation is straightforward, at least for the definition of the models and the numerical method.  Bacteria move with a constant speed $v$ (run), and change direction at random instances in time (tumble), in an attempt to move towards regions with high chemoattractant concentrations.  As in \cite{ErbOthm04}, we describe this behavior by a velocity-jump process driven by some internal state $y \in \mathbb{Y}\subset\R^{n}$ of each individual bacterium. The internal state models the memory of the bacterium and is subject to an evolution mechanism attracted by a function  $\psi: \R^m \to \R^n$ of the chemoattractants concentrations,
\[
S(x):= \psi(\rho_1(x),\ldots,\rho_m(x)) \in \R^n,
\]
where $x$ is the present position of the bacterium. $S$ is assumed to be smooth with bounded derivatives up to order~$2$.
 We refer to Section~2.2 in~\cite{limits} for some typical choices; in the concrete example in this paper, we choose $m=n=1$, and $S(x)=\rho_1(x)$. By convention, the gradient of $S(x):\R^d \to \R^n$ is a matrix with dimension
\[
\nabla S(x) \in \R^{n\times d}.
\]
We then denote the evolution of each individual bacterium position by~
$
  t \mapsto X_t,
$
with normalized velocity
\[
  \frac{\d X_t}{\d t} = \epsilon V_t, \qquad V_t\in \mathbb{V}=\Sph^{d-1},
\]                             
with $\Sph^{d-1}$ the unit sphere in $\R^d$. Hence, $V_t$ represents the direction and the parameter $\epsilon$ represents the size of the velocity.  The evolution of the internal state is denoted by
$t \mapsto Y_t$. The internal state adapts to the local chemoattractant concentration through an ordinary differential equation (ODE),
\begin{equation}\label{eq:internal}
  \frac{\d Y_t}{\d t}= F_\eps(Y_t,S(X_t)),
\end{equation}
which is required to have a unique fixed point $y^*=S(x^*)$ for every fixed value $x^* \in \R^d$. 
We also introduce the deviations from equilibrium $z=S(x)-y$.  The evolution of these deviations is denoted as 
\[
t \mapsto Z_t = S(X_t) -Y_t.
\]
The velocity of each bacterium is switched at random jump times
 $(T_n)_{n \geq 1}$
that are generated via a Poisson process with a time dependent rate given by $\lambda(Z_t)$, where $z \mapsto \lambda(z)$ is a smooth function satisfying
\begin{equation}
  \label{eq:ratebound}
  0 < \lambda_{\rm min} \leq \lambda \pare{z} \leq \lambda_{\rm max},
\end{equation}
as well as,
\begin{equation}\label{eq:lin_rate}
\lambda(z) = \lambda_0 - b^T  z + c_\lambda\bigo \pare{ \abs{z}^k } ,
\end{equation}
with $b \in \R ^ n $, $k \geq 2$, and $c_\lambda >0$ is used to keep track of the non-linearity in the analysis. The new velocity at time $T_n$ is generated at random according to a centered probability distribution
$\M (dv)$ 
with $\int v \, \M (dv) =0 $, typically
\[
\M (dv) = \sigma_{\Sph^{d-1}}(dv),
\]
where $\sigma_{\Sph^{d-1}}$ is the uniform distribution on the unit sphere.

The resulting fine-scale stochastic evolution of a bacterium is then described by a left continuous with right limits (lcrl) process
 \[
 t \mapsto \pare{X_t, V_t, Y_t} ,
 \] 
which satisfies the following differential velocity-jump equation~:
\begin{equation} \label{eq:process_noscale}
  \begin{cases}
   \dps \dfrac{\d X_t}{\d t} = \epsilon  V_t,  \\[8pt]
    \dps \dfrac{\d Y_t}{\d t} =  F_\eps(Y_t,S(X_t)), \\[8pt]
    \dps \int_{T_n}^{T_{n+1}} \lambda(Z_t) \d t = \theta_{n+1}, \qquad \text {with }
    Z_t := S(X_t)-Y_t ,\\[8pt]
    \dps V_{t}  =  \Vj_{n} \quad \text{ for $t \in [T_{n},T_{n+1}]$ } , 
  \end{cases}
\end{equation}
with initial condition $X_0, \Vj_0 \in \R^d$, $Y_0 \in \R^n$ and $T_0 = 0$. 
In \eqref{eq:process_noscale}, $\pare{\theta _n }_{n \geq 1}$ denote i.i.d. random variables with normalized exponential distribution, and $\pare{ \Vj_n }_{n \geq 1}$ denote i.i.d. random variables with distribution $ \mathcal M (dv)$.

\begin{example}
For concreteness, we provide a specific example, adapted from \cite{ErbOthm04}, which will also be used later on to illustrate our results numerically. We consider $m=1$, i.e., there is only one chemoattractant $S(x)$ and the internal dynamics~\eqref{eq:internal} reduces to a scalar equation
\begin{equation}\label{e:scalar-y}
	\frac{\d y(t)}{\d t} = \frac{S(x)-y}{\tau}=\frac{z}{\tau}.
\end{equation}
For the turning rate $z\mapsto \lambda(z)$, we choose the following nonlinear strictly decreasing smooth function 
\begin{equation}\label{eq:rate}
\lambda(z)=2\lambda_0 \pare{\frac{1}{2}-\frac{1}{\pi}\arctan \pare{\frac{\pi}{2\lambda_0}z}}.
\end{equation}
\end{example}

\subsection{Asymptotic regimes and linearizations\label{sec:asympt}}

In the adimensional models~\eqref{eq:process_noscale}-\eqref{eq:cprocess}, a small parameter $\eps$ is introduced, which can be interpreted as the ratio of the typical time of random change of velocity direction to  the typical time associated with chemoattractant variations as seen by the bacteria.  In the present section, we present the main scaling assumptions on the internal dynamics that are necessary to consider the asymptotic regime~$\eps \ll 1$, as well as the corresponding notation.  Most of these assumptions were introduced in the companion paper~\cite{limits}, to which we refer for more details and motivation.

\begin{rem}[Notation]
Throughout the text, the Landau symbol $\bigo$ denotes a \emph{deterministic} and globally Lipschitz function satisfying $\bigo(0)=0$. Its precise value \emph{may vary} from line to line, may depend on all parameters of the model, but its Lipschitz constant is uniform in $\eps$. In the same way, we will denote generically by $C > 0$ a deterministic constant that may depend on all the parameters of the model except $\eps$.
\end{rem}

We assume that the ODE~\eqref{eq:internal} driving the internal state is well approximated by a near equilibrium evolution equation in following sense~:
\begin{ass}\label{a:1}
 We have
\begin{equation}\label{eq:tau_eps_def}
 \Feps(y,s) =  - \taueps^{-1} (y-s) + \eps^{1-\delta}c_F\bigo(\abs{s-y}^2),
\end{equation} 
where $\delta >0$ and $\taueps  \in\R^{n\times n}$ is an invertible constant matrix.
\end{ass}
The constant $c_F$ is used to keep track of the dependance on the non-linearity of $F_\eps$ in the analysis. For technical reasons that are specific to the analysis of the time shift in the coupling (Lemma~\ref{lem:DTestimbis}), we need the following assumption on the scale of the non-linearity (this assumption was not necessary in the companion paper~\cite{limits}):
\begin{ass}\label{a:1bis}
 Assume that $\abs{y-s} = \bigo(\eps^\delta)$, then there is a $\gamma > 0$ such that:
\begin{equation}\label{eq:taueps_hypbis}
\abs { (y-s) - \taueps\Feps(y,s) } =  c_F\bigo( \eps^{1 + \gamma} ).
\end{equation} 
\end{ass}

We now formulate the necessary assumptions on the scale of the matrix $\taueps$~:
\begin{ass}\label{a:2}
There is a constant $C >0$ such that for any $t \geq 0$, one has
\begin{equation}\label{eq:tau_scales}
  \norm{\e^{- t \taueps ^{-1} }} \leq C \e^{- t \eps^{1-\delta}/C}.
\end{equation}
\end{ass}
\begin{ass}\label{a:3}
There is a constant $C >0$ such that
\[
 \sup_{t \geq 0} \norm{t\taueps ^{-1}\e^{- t \taueps ^{-1} } } \leq C  \eps^{-1}.
\]
\end{ass}
Assumption~\ref{a:2} is used to ensure exponential convergence of the linear ODE with time scale at least of order $\eps^{1-\delta}$. Assumption~\ref{a:3} is necessary for technical reasons in the proof of Lemma~\ref{lem:DTestim} (which is given in~\cite{limits}). Note that if $\taueps^{-1}$ is symmetric and strictly positive with a lower bound of order $\bigo(\eps^{1-\delta})$ on the spectrum, then Assumption~\ref{a:2} and Assumption~\ref{a:3} are satisfied. This implies that the slowest time scale of the ODE~\eqref{eq:internal} is at worse of order $\eps^{1-\delta}$.  Indeed, integration over time in Assumption~\ref{a:2} implies $\norm{\taueps} \leq C \eps^{\delta-1}$. 
Moreover, we  assume that the solution of the ODE driving the internal state in \eqref{eq:process_noscale} satisfies the following long time behavior~:
\begin{ass}
   \label{ass:ODE}
Consider a path $t \mapsto S_t$ such that $ \sup_{t \in [0,+\infty] } \abs{ \dfrac{ d S_t}{ dt} }= \bigo( \eps )$, and assume $\abs{Y_0 - S_0} = \bigo(\eps^\delta)$. Then the solution of
\[
\frac{ d Y_t}{ dt} = \Feps(Y_t,s_t),
\]
satisfies
\[
\sup_{t \in [0,+\infty] } \abs{ Y_t - S_t } = \bigo \pare{ \eps^\delta  }.
\]
\end{ass}
Assumption~\ref{ass:ODE} is motivated by the case of linear ODEs; indeed, in that case, Assumption~\ref{ass:ODE} is a consequence of Assumption~\ref{a:2} (see Section~3.3 in~\cite{limits}). 
Recalling that $Z_t=S_t-Y_t$, we will assume in the remainder of the paper that the solution of~\eqref{eq:process_noscale} satisfies,
\[
\abs{Z_0} = \bigo(\eps^\delta), 
\]
as well as 
\[
\abs{\taueps^{-1} Z_0} = \bigo( 1 ).
\]
Then, under Assumption~\ref{a:1},  Assumption~\ref{a:2},  Assumption~\ref{a:3}, and Assumption~\ref{ass:ODE}, we obtain the bounds (see Section~3.3 in~\cite{limits})
\begin{equation}
    \label{eq:techbound}
    \sup_{t \in [0,+\infty] } \abs{ \taueps^{-1}\e^{- \taueps^{-1} t } (S_t - Y_t) } = \bigo \pare{ 1 } ,
  \end{equation}
\[
\sup_{t \in [0,+\infty]} \pare{ \abs{Z_t} } =  \bigo(\eps^{\delta}),
\]
as well as
\[
\sup_{t \in [0,+\infty]} \pare{ \abs{ \taueps^{-1} \e^{- \taueps^{-1} t} Z_t} } =  \bigo(1) .
\]

Finally, we will make use of the following notation. Denoting 
$c_S = \norm{\nabla^{2}  S}_{\infty},$
with $\nabla^2$ the Hessian, 
the error terms due to non-linearities of the problem will be handled through
\begin{equation}
  \label{eq:nl_err}
  \begin{cases}
     \nl(\eps) := c_{F} \eps^{\delta} + c_S \eps + c_{\lambda} \eps^{ k \delta-1}  = \smallo(1),\\
     \nlbis(\eps) := c_{F}( \eps^{\delta} + \eps^\gamma)+ c_S( \eps + \eps^\delta) + c_{\lambda} \eps^{ k \delta-1}  = \smallo(1) .
  \end{cases}
\end{equation}
 Note that $ \nl(\eps) \leq \nlbis(\eps)$ and $ \nl(\eps) \sim \nlbis(\eps)$ if $\delta \leq 1$ and $\delta \leq \gamma$. Recall that $k \geq 2$ is defined in~\eqref{eq:lin_rate}.

Note that all the assumptions of the present section hold in the following case: (i) the ODE is linear ($F_\eps$ is linear), (ii) the associated matrix $\tau_{\eps}$ is symmetric positive and satisfies
\[
\tau_{\eps}^{-1} \geq C \eps^{1-\delta},
\]
for some $\delta > 0$, (iii) the initial internal state is close to equilibrium in the sense that $\taueps^{-1} Z_0=\bigo(1)$. The assumptions of the present section can be seen as technical generalizations to non-linear ODEs with non-symmetric linear part.

\subsection{Model with direct gradient sensing (control process)}

We now turn to a simplified, ``coarse'' model, in which the internal process (\ref{eq:internal}), and the corresponding state variables, are eliminated.  Instead, the turning rate  depends directly on the chemoattractant gradient. This process will be called the \emph{control process} since it will be used in Section~\ref{sec:method} as a control variate to perform variance reduced simulations of (\ref{eq:process_noscale}).

The control process is a Markov process in position-velocity variables
\[
t \mapsto (X^c_t,V^c_t) ,
\]
which evolves according to the following differential velocity-jump equations~:
\begin{equation} \label{eq:cprocess}
\system{
& \frac{\d X_t^c}{\d t} = \eps V_t^c , &\\
& \int_{T_n^c}^{T_{n+1}^c} \lambda^c_\eps(X_t^c , V_t^c) \d t = \theta_{n+1} , & \\
& V_{t}^c  =  \Vj_n  \quad \text{ for $t \in [T_{n}^c,T^c_{n+1}]$ } , & 
}
\end{equation}
with initial condition $X_0, \Vj_0 \in \R^d$.  In (\ref{eq:cprocess}), $\pare{\theta _n }_{n \geq 1}$ denote i.i.d. random variables with normalized exponential distribution, and $\pare{ \Vj_n }_{n \geq 1}$ denote i.i.d. random variables with distribution $ \Max (dv)$.
The turning rate of the control process is assumed to satisfy
\begin{equation}
  \label{eq:rateboundc}
  0 < \lambda_{\rm min} \leq \lambda^c_{\eps}(x,v) \leq \lambda_{\rm max},
\end{equation}
as well as
\begin{equation}\label{eq:control_rate}
\lambda^c_\eps(x,v) := \lambda_0 - \eps\; A^T_\eps(x)  v  + \bigo(\eps^2).
\end{equation}

Typically, $\lambda^c_{\eps}(x,v)$ is a function of $\nabla S(x)$, so that the model \eqref{eq:control_rate} may describe a large bacterium that is able to directly sense chemoattractant gradients. When $m=n=1$, and the turning rate (\ref{eq:control_rate}) is proportional to $ \nabla S(x)  v \in \R$, it can be interpreted as follows: the rate at which a bacterium will change its velocity direction depends on the alignment of the velocity with the gradient of the chemoattractant concentration $\nabla S(x)$, resulting in a transport towards areas with higher chemoattractant concentrations.  
In the companion paper \cite{limits}, it is shown that, when the vector field $A_\eps(x) \in \R^{d}$ is given by the formula
\begin{equation}
  \label{eq:Adef}
  A_\eps(x) = b^T \frac{\taueps}{\Id + \lambda_0 \taueps} \nabla S(x),
\end{equation}
asymptotic consistency with the internal state model is obtained in the limit when $\eps\to 0$ (see Section~\ref{sec:model-hydro}). 

\subsection{Kinetic formulation and advection-diffusion limits\label{sec:model-hydro}}

We now turn to the kinetic description of the probability distribution of the processes introduced above.
The probability distribution density of the fine-scale process with internal state at time $t$ with respect to the measure $\d x \, \Max ( \d v ) \, \d y$ is denoted as
$p(x,v,y,t)$, suppressing the dependence on $\eps$ for notational convenience, and evolves according to the Kolomogorov forward evolution equation (or master equation). In the present context, the latter is the following kinetic equation
\begin{equation} \label{e:kinetic}
\partial_t p + \eps v \cdot \nabla_x p + \div_y  \pare{ F_\eps(x,y) p } = \lambda \pare{S(x)-y} \pare{   R(p)    - p   },
\end{equation}
where 
\[
R(p) := \int_{v \in \Sph^{d-1}} p(\cdot,v,\cdot) \, \Max(dv)
\]
is the operator integrating velocities with respect to $\Max$, and $F(x,y)$ and $\lambda(z)$ are defined as in Section \ref{sec:model-internal}. 
Similarly, the distribution density of the control process is denoted as
$p^c(x,v,t)$, and evolves according to
the kinetic/master equation:
\begin{equation} \label{e:kinetic_control}
\partial_t p^c + \eps v \cdot \nabla_x p^c = \pare{   R(\lambda^c_\eps p)    - \lambda^c_\eps p   },
\end{equation}
with $\lambda^c_\eps$ defined as in \eqref{eq:control_rate}.
The reader is referred to \cite{EthKur86} for the derivation of master equations associated to Markov jump processes.

In the companion paper \cite{limits}, we show, using probabilistic arguments, that, in the limit $\eps \to 0$, both the equation for the control process~\eqref{e:kinetic_control} and the equation for the process with internal state~\eqref{e:kinetic} converge to an advection-diffusion limit on diffusive time scales.  Convergence is to be understood \emph{pathwise}, in the sense of convergence of probability distribution on paths endowed with the uniform topology. 

Denote
diffusive times by 
\[
\bt:=t  \eps^2,
\]
and the processes considered on diffusive time scales as
\[
X^{\eps}_\bt:=X_{\bt/\eps^2}, \qquad X^{c,\eps}_\bt:=X^c_{\bt/\eps^2}.
\]
For the control process, we then have the following theorem ~:
\begin{theorem}
For $\eps \to 0$, the process $\bt \mapsto X^{c,\eps}_\bt$, solution of~\eqref{eq:cprocess}, converges towards an advection-diffusion process, satisfying the stochastic differential equation (SDE)
\begin{equation}\label{eq:cprocess_hydro}
\d X_{\bar{t}}^{c,0} =\pare{\frac{D A_0(X_{\bar{t}}^{c,0})}{\lambda_0} \d \bt + \pare{\frac{2D}{\lambda_0}}^{1/2} \d W_{\bar{t}}},
\end{equation}
where $\bt \mapsto  W_\bt$ is a standard Brownian motion, the parameters $\lambda_0$ and $A_0:=\lim_{\eps\to 0} A_\eps$ originate from the turning rate~\eqref{eq:control_rate}, and the diffusion matrix is given by the covariance of the Maxwellian distribution~:
\begin{equation}
  \label{eq:D}
  D =  \int_{\Sph^{d-1}} v \otimes v \, \Max(dv) \in \R^{d \times d}.
\end{equation}
\end{theorem}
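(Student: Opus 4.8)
The plan is to establish the diffusive limit of the velocity-jump process $\bt \mapsto X^{c,\eps}_\bt$ via a martingale/generator approach, which is the standard route for proving convergence of Markov processes to a diffusion. The core idea is to show that the rescaled process solves a martingale problem whose limit is the martingale problem associated to the SDE \eqref{eq:cprocess_hydro}. Since convergence is asserted pathwise (in distribution on path space with the uniform topology), I would combine (i) identification of the limiting generator via a formal Chapman--Enskog / Hilbert expansion of the infinitesimal generator, and (ii) a tightness argument, which together yield convergence in law.

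First I would write down the infinitesimal generator $\mathcal{L}^\eps$ of the (original-time) Markov process $(X^c_t, V^c_t)$ acting on smooth test functions $f(x,v)$:
\begin{equation}\label{eq:gen}
\mathcal{L}^\eps f(x,v) = \eps\, v \cdot \nabla_x f(x,v) + \lambda^c_\eps(x,v)\pare{\int_{\Sph^{d-1}} f(x,v')\,\Max(dv') - f(x,v)}.
\end{equation}
Passing to diffusive time $\bt = t\eps^2$ multiplies the generator by $\eps^{-2}$, so the relevant operator is $\eps^{-2}\mathcal{L}^\eps$. I would then seek the limit by testing against functions expanded as $f^\eps = f_0 + \eps f_1 + \eps^2 f_2 + \cdots$, where $f_0 = f_0(x)$ depends on position only (the null space of the fast collision operator $Q f := \lambda_0(Rf - f)$). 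Collecting powers of $\eps$, the $\bigo(\eps^{-2})$ term forces $f_0$ to be velocity-independent; the $\bigo(\eps^{-1})$ term gives $Q f_1 = -v\cdot\nabla_x f_0$, which is solvable by the Fredholm alternative since the right-hand side is centered against $\Max$ (because $\int v\,\Max(dv)=0$), yielding $f_1 = \lambda_0^{-1}\, v\cdot\nabla_x f_0$; and the $\bigo(\eps^0)$ term, after integrating against $\Max$ to enforce solvability, produces the limiting generator. The contribution $-\eps A_\eps^T(x) v$ in the rate \eqref{eq:control_rate} interacts with $f_1$ to generate the advection term, and the closure of $v\cdot\nabla_x f_1$ against $\Max$ produces the diffusion term with matrix $D$ from \eqref{eq:D}. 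I expect this computation to reproduce exactly
\[
\mathcal{L}^0 f_0(x) = \frac{D A_0(x)}{\lambda_0}\cdot\nabla_x f_0(x) + \frac{1}{\lambda_0}\,D : \nabla_x^2 f_0(x),
\]
which is the generator of \eqref{eq:cprocess_hydro}, noting that the quadratic variation $(2D/\lambda_0)$ matches the second-order coefficient.

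Having identified the limit generator, I would prove \textbf{tightness} of the family $\set{X^{c,\eps}_\bt}_{\eps>0}$ in $C([0,T];\R^d)$, e.g.\ via an Aldous-type criterion or by controlling increments through the corrector: the key estimate is that $X^{c,\eps}_\bt - \eps f_1(X^{c,\eps}_\bt, V^{c,\eps}_\bt)$ is, up to a martingale with controlled quadratic variation, a process with uniformly bounded modulus of continuity, while the corrector term $\eps f_1$ is itself $\bigo(\eps)$ and vanishes uniformly. Because the speed is bounded (velocity lives on the sphere and $|\dot X^c| \le \eps$), the uncorrected position increments are uniformly Lipschitz after rescaling, which makes the tightness bookkeeping clean. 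Combining tightness with the generator convergence, every limit point solves the martingale problem for $\mathcal{L}^0$; well-posedness of that martingale problem (guaranteed by the smoothness and boundedness of $A_0$, which follows from the assumed smoothness of $S$ and formula \eqref{eq:Adef}) then gives uniqueness of the limit and hence convergence in law to the solution of \eqref{eq:cprocess_hydro}.

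\textbf{The hard part} will be making the pathwise convergence rigorous rather than merely formal: the corrector $f_1$ is unbounded-free on the compact velocity sphere so that is not an issue, but one must control the remainder terms $\bigo(\eps^2)$ in the rate \eqref{eq:control_rate} and the error from truncating the Hilbert expansion uniformly in $\eps$, and verify that the boundary/initial layer (the relaxation of $V^c$ toward its Maxwellian equilibrium) does not contaminate the diffusive limit. Since the statement says the result is proved in the companion paper \cite{limits} by probabilistic (pathwise coupling) arguments, I would in practice invoke that reference for the technical estimates and present the generator computation above as the structural heart of the proof, emphasizing that the same null-space/Fredholm mechanism underlies both the control process and the internal-state process, which is precisely what makes their diffusive limits coincide.
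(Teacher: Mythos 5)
Your generator computation is correct (it does reproduce the drift $DA_0/\lambda_0$ and the second-order coefficient $D/\lambda_0$, consistent with the noise intensity $(2D/\lambda_0)^{1/2}$), and the martingale-problem-plus-tightness strategy would indeed prove the theorem. But note that this paper does not prove the statement at all: it imports it from the companion paper, which is explicitly said to proceed by a \emph{pathwise probabilistic} technique rather than by a Hilbert expansion of the generator. The structure of that argument is visible in Section~4 of the present paper: one expands the inter-jump times, as in Lemma~\ref{lem:dt-control}, $\Delta T^c_{n+1}=\theta_{n+1}/\lambda_0+\eps\,\theta_{n+1}\lambda_0^{-2}A_\eps^T(X^c_{T^c_n})\Vj_n+\theta_{n+1}\bigo(\eps^2)$, writes the position after $n$ jumps as $\Xi^c_n=X_0+\eps\sum_m\Delta T^c_{m+1}\Vj_m$, and then identifies the leading term $\eps\sum_m(\theta_{m+1}/\lambda_0)\Vj_m$ as a martingale converging (by a martingale functional CLT, using $\E(\theta^2)=2$ to produce the factor $2D/\lambda_0$) to the Brownian part, while the $\bigo(\eps^2)$ correction $\eps^2\sum_m\theta_{m+1}\lambda_0^{-2}\Vj_m A_\eps^T\Vj_m$ sums, over the $\bigo(\eps^{-2})$ jumps occurring before diffusive time $\bt$, to the drift $DA_0\bt/\lambda_0$ by a law of large numbers; a separate estimate controls the random number of jumps $N$. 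The two routes buy different things: your perturbed-test-function/Fredholm argument is more systematic, generalizes readily to other collision operators, and makes the identification of the limit generator transparent; the paper's jump-time expansion is more elementary and, crucially, is the form needed elsewhere in this paper, since the same expansions are reused verbatim in Section~\ref{sec:variance} to couple the two processes increment by increment and obtain the quantitative variance estimates of Theorem~\ref{thm:coupling} --- something the generator approach does not directly provide.
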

In particular, this result implies that, at the level of the Kolomogorov/master evolution equation, the evolution of the position bacterial density,
\begin{equation}\label{eq:cdensity}
	n^{c,\eps}(x,\bt):= n^c(x,\bt/\eps^2):=\int_{\V}p^c(x,v,\bt/\eps^2) \,  \Max(\d v) 
\end{equation}
converges to the advection-diffusion equation \begin{equation}\label{eq:cdens_hydro}
\partial_\bt n^{c,0} = \frac{1}{\lambda_0}\div_x \pare{ D \nabla_x n^{c,0} - D A_0(x) n^{c,0}}
\end{equation}
on diffusive time scales as $\epsilon \to 0$.

In the same way, a standard probabilistic diffusion approximation argument can be used to derive the pathwise diffusive limit of the process with internal state~\eqref{eq:process_noscale}~:
\begin{theorem}
For $\eps \to 0$, the process $\bt \mapsto X^{\eps}_\bt$, solution of~\eqref{eq:process_noscale}, converges towards an advection-diffusion process, satisfying the stochastic differential equation (SDE)~\eqref{eq:cprocess_hydro}, where $A_0$ originates from 
\begin{equation}\label{eq:control_field}
A_0(x) = b^T \lim_{\eps \to 0}  \frac{\taueps}{\lambda_0 \taueps + \Id } \nabla S(x),
\end{equation}
in which, $b$, $\taueps$, and $\lambda_0$ were introduced in \eqref{eq:tau_eps_def}-\eqref{eq:lin_rate} as parameters of the process with internal state, and $\Id \in \R^{n\times n}$ is the identity matrix. Again, the diffusion matrix $D$ is given by the covariance of the Maxwellian distribution~\eqref{eq:D}.
\end{theorem}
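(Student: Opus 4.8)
The plan is to treat the augmented triple $t\mapsto(X_t,V_t,Z_t)$, with $Z_t=S(X_t)-Y_t$, as a Markov process and to perform a diffusion approximation on the diffusive time scale $\bt=\eps^2 t$, exactly as in the preceding theorem for the control process, the only genuinely new ingredient being the elimination of the fast internal variable $Z_t$. On this scale the rescaled position reads $X^\eps_\bt = X_0 + \eps^{-1}\int_0^\bt V_{u/\eps^2}\,\d u$, so everything reduces to understanding the fast velocity integral together with the memory carried by $Z_t$. Three time scales coexist: the $\bigo(1)$ decorrelation of the velocity (tumbling at rate $\approx\lambda_0$), the intermediate relaxation of the internal state at rate $\taueps^{-1}=\bigo(\eps^{1-\delta})$ controlled by Assumption~\ref{a:2}, and the diffusive scale $\bigo(\eps^2)$; the heart of the argument is that the first two combine to produce the advection coefficient \eqref{eq:control_field}.

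First I would linearize the internal dynamics. Using Assumption~\ref{a:1} in \eqref{eq:tau_eps_def}, the deviation solves $\dot Z_t = \eps\nabla S(X_t)V_t - \taueps^{-1}Z_t + (\text{nonlinear})$, so by the variation-of-constants formula $Z_t = \e^{-\taueps^{-1}t}Z_0 + \eps\int_0^t \e^{-\taueps^{-1}(t-\sigma)}\nabla S(X_\sigma)V_\sigma\,\d\sigma + (\text{error})$. The a priori bounds \eqref{eq:techbound} together with Assumptions~\ref{a:2}--\ref{ass:ODE} let me discard the transient $\e^{-\taueps^{-1}t}Z_0$ and the nonlinear remainder, the latter being absorbed into the quantity $\nl(\eps)=\smallo(1)$ of \eqref{eq:nl_err} over the diffusive horizon $\bigo(\eps^{-2})$. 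Substituting into \eqref{eq:lin_rate} gives a turning rate $\lambda(Z_t)=\lambda_0 - b^TZ_t + \bigo(\abs{Z_t}^2)$ that is an explicit (though non-Markovian) functional of the velocity history.

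Next I would run the standard velocity-jump diffusion approximation. Introducing the corrector $\chi(v)=v/\lambda_0$, which solves the cell problem $\lambda_0\pare{R-\Id}\chi=-v$ for the dominant jump operator, Dynkin's formula rewrites $\eps^{-1}\int_0^\bt V_{u/\eps^2}\,\d u$ as a boundary term, a square-integrable martingale, and a drift term produced by the rate modulation $-b^TZ_t$. The martingale part has predictable quadratic variation converging to $2D\lambda_0^{-1}\bt$, with $D$ the Maxwellian covariance \eqref{eq:D}, yielding the Brownian contribution of \eqref{eq:cprocess_hydro}. The drift is the stationary correlation of $V_t$ with the modulation; inserting the kernel representation of $Z_t$ and the velocity autocorrelation $\E[V_t\otimes V_{t-u}]=\e^{-\lambda_0 u}D$ leads to the matrix integral $\int_0^\infty \e^{-\lambda_0 u}\e^{-\taueps^{-1}u}\,\d u = \pare{\lambda_0\Id + \taueps^{-1}}^{-1} = \taueps(\lambda_0\taueps+\Id)^{-1}$, which, sandwiched between $b^T$ and $\nabla S(x)$, is exactly the factor appearing in \eqref{eq:control_field}; passing to the limit $\eps\to0$ then produces the advection field $A_0$ and the drift $D A_0/\lambda_0$. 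Finally, tightness of $\set{X^\eps}_\eps$ in the uniform topology (from moment bounds on the increments) together with uniqueness of the martingale problem associated to the SDE \eqref{eq:cprocess_hydro} identifies the limit and gives the claimed pathwise convergence.

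The main obstacle is the intermediate scale $\taueps^{-1}=\bigo(\eps^{1-\delta})$: because the internal state relaxes neither instantaneously nor on the diffusive scale, the effective dynamics of $(X,V)$ is genuinely non-Markovian and the corrector and correlation estimates must be made \emph{uniform in $\eps$}. This is precisely where Assumptions~\ref{a:2} and \ref{a:3} are needed --- to guarantee exponential relaxation with an $\eps$-uniform constant so that the memory integral converges to $\taueps(\lambda_0\taueps+\Id)^{-1}$ and so that the accumulated nonlinear error remains $\smallo(1)$ on the time interval of length $\bigo(\eps^{-2})$. A lighter alternative, consistent with the role played by the control process in this paper, is to avoid re-deriving the diffusion limit from scratch: couple \eqref{eq:process_noscale} to the control process \eqref{eq:cprocess} with rate \eqref{eq:Adef}, show the two positions stay $\smallo(1)$-close on diffusive times, and invoke the already-established limit of the control process to transfer the conclusion.
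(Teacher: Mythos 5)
Your outline is essentially sound, but note first that this paper does not actually prove the theorem: it is imported from the companion paper \cite{limits} (``a standard probabilistic diffusion approximation argument can be used\dots''), and what the present paper exposes of that proof is the discrete machinery of Section~\ref{sec:not} --- the embedded jump chain $\Xi_n=X_0+\eps\sum_m \Delta T_{m+1}\Vj_m$, the expansion $\Delta T_{n+1}=\Delta T^0_{n+1}+\eps\Delta T^1_{n+1}+\dots$ of Lemma~\ref{lem:DTestim}, and the identity $\E\bigl(m(\theta/\lambda_0)\bigr)=\tfrac{1}{\lambda_0}\,\taueps(\Id+\lambda_0\taueps)^{-1}$ of \eqref{eq:av_m}. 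So the route actually taken is a renewal-type analysis of the jump chain, where the drift appears by averaging $m(\theta/\lambda_0)$ over the exponential holding time and the fluctuations are controlled by discrete martingale (Burkholder--Davis--Gundy) estimates. Your proposal takes a genuinely different, continuous-time route: Markovianize via $(X,V,Z)$, solve the cell problem $\lambda_0(R-\Id)\chi=-v$, and extract the drift as a stationary velocity--rate-modulation correlation. The two are consistent --- your kernel integral $\int_0^\infty \e^{-\lambda_0 u}\e^{-\taueps^{-1}u}\,\d u=\taueps(\lambda_0\taueps+\Id)^{-1}$ is exactly the continuous-time avatar of $\lambda_0\,\E\bigl(m(\theta/\lambda_0)\bigr)$, and both correctly produce \eqref{eq:control_field} --- but they buy different things: the generator/corrector/martingale-problem route is conceptually cleaner and connects directly to the kinetic equation \eqref{e:kinetic}, whereas the jump-chain route yields the \emph{quantitative} increment expansions that are reused verbatim in Theorem~\ref{thm:coupling}, which is why the authors organize the analysis that way. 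The one place your sketch is materially lighter than a proof is the treatment of the intermediate scale: because $\taueps^{-1}=\bigo(\eps^{1-\delta})$, the perturbation $-b^TZ_t$ of the jump rate is $\bigo(\eps^\delta)$, not $\bigo(\eps)$, and the ``stationary correlation'' computation must be justified with $\eps$-uniform decorrelation estimates rather than a formal two-scale expansion; you flag this correctly, and your fallback --- transferring the limit from the control process via the coupling bound of Theorem~\ref{thm:coupling}, which gives $\E|X_{\bt/\eps^2}-X^c_{\bt/\eps^2}|^p\to 0$ --- is a legitimate and economical alternative within the logic of this paper.
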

Also here, convergence needs to be understood in terms of probability distribution on paths endowed with the uniform topology.

Introducing the bacterial density of the process with internal state as
\begin{equation}
	n(x,t)= \int_{\Y}\int_{\V}p(x,v,y,t) \Max(\d v) \d y,
\end{equation}
this implies that the evolution of $n$ converges to \eqref{eq:cdens_hydro} on diffusive time scales in the limit of $\epsilon \to 0$. 

\section{Numerical method\label{sec:method}}

To simulate the process with internal state, solving the kinetic equation \eqref{e:kinetic} over diffusive time scales can be cumbersome, due to the additional dimensions associated with the internal state. The alternative is to use to stochastic particles. However, a particle-based simulation of equation \eqref{e:kinetic}  is subject to a large statistical variance of the order $\mathcal{O}(N^{-1/2})$, where $N$ is the number of simulated particles. The asymptotic analysis shows that the position bacterial density approaches an advection-diffusion limit \eqref{eq:cdens_hydro} when $\eps\to 0$. Consequently, to accurately assess the deviations of the process with internal state \eqref{eq:process_noscale} as compared to its advection-diffusion limit (for small but non-vanishing (intermediate) values of $\eps$), the required number of particles needs to increase substantially with decreasing $\eps$, which may become prohibitive from a computational point of view.

In this section, we therefore propose a hybrid method, based on the principle of control variates, that couples the process with internal dynamics with the control process, which is simulated simultaneously using a grid-based method. We first describe the variance reduction technique (Section \ref{sec:coupl_princ}).  The analysis in Section~\ref{sec:variance} will reveal that the variance reduction is \emph{asymptotic}, in the sense that the variance vanishes in the diffusion limit.  To ensure this asymptotic variance reduction during actual simulations, one needs to ensure that the time discretization preserves the diffusion limits of the time-continuous process.  An appropriate time discretization is discussed in Section \ref{sec:discr}.

\subsection{Coupling and asymptotic variance reduction\label{sec:coupl_princ}}

The proposed variance reduction technique is based on the introduction of a control variate that exploits a coupling between the process with internal state and the control process.
While the idea of control variates for Monte Carlo simulation is already well known, see e.g.~\cite{KloPla92} and references therein, the coupling that is proposed here is particular (we call it \emph{asymptotic}), since the difference of the coupled processes, and hence the variance, vanishes with an estimable rate in the diffusion limit $\epsilon\to 0$, as will be shown in Section \ref{sec:variance}. 

\subsubsection{The control variates}
Let us first assume that we are able to compute the exact solution of the kinetic equation for the control process, \eqref{e:kinetic_control}, with infinite precision in space and time.

The algorithm of asymptotic variance reduction is based on a coupling between an ensemble of realizations evolving according to the process with internal state \eqref{eq:process_noscale}, denoted as
\[ \left\{X^i_t,V^i_t,Y^i_t\right\}_{i=1}^N,
\]
and an ensemble of realization of the control process \eqref{eq:cprocess}, denoted as 
\[ \left\{X^{i,c}_t,V^{i,c}_{t}\right\}_{i=1}^N.
\]
We denote the empirical measure of the particles with internal state in position-velocity space as 
\[
\mu_\bt^{N}(x,v) = \frac{1}{N} \sum_{i=1}^{N} \delta_{X^i_{\bt/\eps^2},V^i_{\bt/\eps^2}},
\]
and, correspondingly, the empirical measure of the control particles as
\[
\mu_\bt^{c,N} = \frac{1}{N} \sum_{i=1}^{N} \delta_{X^{c,i}_{\bt/\eps^2},V^{c,i}_{\bt/\eps^2}}.
\]

A coupling between the two ensembles is obtained by ensuring that both simulations use \emph{the same random numbers} $(\theta_n)_{n\ge 1}$ and $(\Vj_n)_{n\ge 0}$, which results in a strong correlation between $(X^i_{t},V^{i}_t)$ and $(X_t^{i,c},V^{c,i}_t)$ for each realization.
Simultaneously, the kinetic equation for the control process \eqref{e:kinetic_control} is also solved using a deterministic method (which, for now, is assumed to be exact). We formally denote the corresponding semi-group evolution as 
\[
\e^{\bt L^c}, \qquad \text{ with } L^c(p^c)= - \eps v \cdot \nabla_x p^c + \pare{   R(\lambda^c_\eps p^c)    - \lambda^c_\eps p^c   }.
\]
Besides the two particle measures $\mu^N_{\bar{t}}$ and $\mu^{c,N}_{\bar{t}}$, we denote by $\overline{\mu}^N_{\bar{t}}$ the variance reduced measure, which will be defined by the algorithm below. Since, with increasing diffusive time, the variance of the algorithm increases due to a loss of coupling between the particles with internal state and the control particles, the variance reduced algorithm will also make use of a reinitialization time step $\dtini$, which is defined on the diffusive time scale. The corresponding time instances are denoted as $\bar{t}_n=n\dtini$ on the diffusive time scale, or equivalently, on the original time scale as $t_n=n\dtini/\eps^2$.

Starting from an initial probability measure $\mu_0$ at time $t=0$, we sample $\mu_0$ to obtain the ensemble $\left\{X^i_t,V^i_t,Y^i_t\right\}_{i=1}^N$, corresponding to $\mu^N_0$, and then set $\mu^{c,N}_0:=\mu^N_0$, i.e., $X^{i,c}_{0}=X^{i}_{0}$ and $V^{i,c}_{0}=V^{i}_{0}$ for all $i=1,\ldots,N$.  Furthermore, we set the variance reduced estimator as $\overline{\mu}_0^N:=\mu_0=\E(\mu_0^N)$. We then use the following algorithm to advance from $\bar{t}_n$ to $\bar{t}_{n+1}$, (see also Figure~\eqref{fig:pic})~:
\begin{figure}
  \includegraphics[scale=0.5]{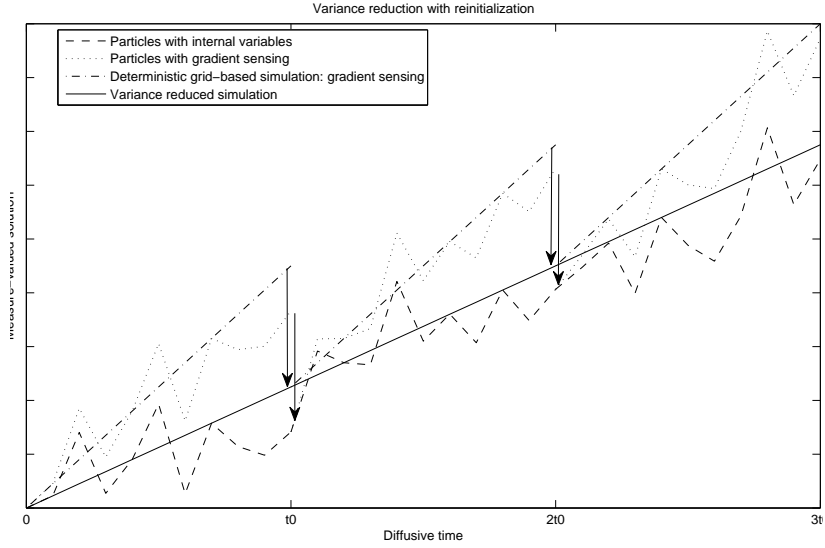}
	\caption{\label{fig:pic} A schematic description of Algorithm \ref{algo}. The dashed line represent the evolution of $N$ bacteria with internal state. The dotted line represent the coupled evolution of $N$ bacteria with gradient sensing, subject to regular reinitializations. The dashed-dotted line is computed according to a deterministic method simulating the density of the model with gradient sensing, and subject to regular reinitializations. The solid line is the variance reduced simulation of the internal state dynamics, and is computed by adding the difference between the particle computation with internal state, and the particle simulation with gradient sensing to the deterministic gradient sensing simulation.  At each reinitialization step, the two simulations (deterministic and particles) of the gradient sensing dynamics are reinitialized to the values of their internal state simulation counterpart (as represented by the arrows).}
\end{figure}
\begin{alg}\label{algo}
At time $t_n$, we have that the particle measure $\mu_{\bar{t}_n}^{c,N}= \mu_{\bar{t}_n}^{N}$, and the variance reduced measure is given by  $\overline{\mu}^N_{\bar{t}_n}$.  
To advance from time $ \bar{t}_n$ to $\bar{t}_{n+1}$, we perform the following steps~:
  \begin{itemize}
  \item Evolve the particles $\left\{X^i_t,V^i_t,Y^i_t\right\}_{i=1}^N$ from $t_n$ to $t_{n+1}$, according to \eqref{eq:process_noscale},
   \item  Evolve the particles $\left\{X^{i,c}_t,V^{i,c}_{t}\right\}_{i=1}^N$ according to \eqref{eq:cprocess}, using the same random numbers as for the process with internal state,
 \item Compute the variance reduced evolution
   \begin{equation}
     \label{eq:var_reduced_estimation}
     \overline{\mu}^N_{\bar{t}_{n+1}} = \overline{\mu}^N_{\bar{t}_n} \e^{\dtini / \eps^2 L^c}  + \pare{\mu_{\bar{t}_{n+1}}^{N} - \mu_{\bar{t}^{-}_{n+1}}^{c,N} }.
   \end{equation}
\item Reinitialize the control particles by setting 
\[
X^{i,c}_{t_{n+1}}=X^{i}_{t_{n+1}}, \qquad V^{i,c}_{t_{n+1}}=V^{i}_{t_{n+1}}, \qquad i=1,\ldots,N,
\]
i.e., we set the state of the control particles to be identical to the state of the particles with internal state. 
\end{itemize}
\end{alg}
In formula \eqref{eq:var_reduced_estimation}, we use the symbol $\bar{t}^{-}_{n+1}$ to emphasize that the involved particle positions and velocities are those obtained \emph{before} the reinitialization.
An easy computation shows that the algorithm is unbiased in the sense that for any $n \geq 0$,
\[
\E \pare{ \overline{\mu}^N_{\bar{t}_n}} = \E \pare{\mu^N_{\bar{t}_n}},
\]
since the particles with internal dynamics are unaffected by the reinitialization, and, additionally, 
\[
\E \mu^{c,N}_{\bar{t}_{n+1}}= \E \overline{\mu}^N_{\bt_n}\e^{\dtini/ \eps^2 L^c}. 
\]
Moreover, the variance is controlled by the coupling between the two processes. Indeed, using the independence of the random numbers between two steps of Algorithm~\ref{algo}, and introducing $\ph$ as a position and velocity dependent test function), we get (according to the main result of Theorem~\ref{thm:coupling})
\begin{align}
  \var( \overline{\mu}^N_{\bar{t}_n}(\ph)  ) &= \sum_{k=1}^n \E \pare{\mu_{}^{N}(\ph) - \mu_{\bar{t}_k^{-}}^{c,N}(\ph) }^2 \nonumber \\
 & \leq  \sum_{k=1}^n \frac{\norm{\nabla \ph}_\infty}{N} \E \pare{  \abs{X_{\bar{t}_k/\eps^2} - X_{\bar{t}_k/\eps^2}^c  }^2 } , \nonumber
\end{align}
and thus
 \begin{align}\label{eq:MC_bound}
  \var( \overline{\mu}^N_{\bar{t}_n}(\ph)  ) 
& \leq C n\frac{ \eps + \eps^\delta + \nlbis(\eps) }{N},
\end{align}
where in the last line, $C$ is independant of $n$, $\eps$, and $N$.

In some generic situations (see Section~\ref{sec:sharp}), we can argue that the statistical error in Algorithm~\ref{algo} coming from the coupling is ``sharp'' with respect to the order in $\eps$. This means that the difference between the probability distribution of the model with internal state and the probability distribution of the model with gradient sensing is of the same order.  This would imply that, with the asymptotic variance reduction technique, one is able to reliably assess the true deviation of the process with internal variables from the control process using a number of particles $N$ that is independent of $\eps$.

\subsubsection{Time-space discretization of the kinetic equation}

For both processes, the probability density distribution of the position and velocity can then be computed via binning in a histogram, or via standard kernel density estimation \cite{WScott:1992p8096,WSilverman:1986p8151} using a kernel~$K_h$, in which the chosen bandwidth $h$ can be based on the grid used for the deterministic simulation and on the data, for instance using the Silverman heuristic \cite{WSilverman:1986p8151}. This yields
\begin{equation}\label{eq:kde}
	\hat{p}_N(x,v,t) = \frac{1}{N h} \sum_{i=1}^{N} K_h(x - X^i_{t},v-V^i_{t}),
\end{equation}
and similarly 
\begin{equation}\label{eq:kdec}
	\hat{p}^c_N(x,v,t) = \frac{1}{N h} \sum_{i=1}^{N} K_h(x - X^{c,i}_{t},v-V^{c,i}_{t}).
\end{equation}

The important point is that the solution $p^c(x,v,t)$ for the control process may be accurately approximated with a deterministic grid-based method, which ensures that 
\[
\abs{p^c(x,v,\bt / \eps^2)-p^c_{\rm grid}(x,v,\bt / \eps^2)} = \bigo(\delta x^l) +\bigo(\delta t^k),
\]
for some integers $k,l\ge 1$.
\begin{remark}
In this text, we will perform the simulations in one space dimension using a third-order upwind-biased scheme, and perform time integration using the standard fourth order Runge--Kutta method, i.e., $l=3$ and $k=4$.
\end{remark} 
Then, the unbiased nature of the variance-reduced estimator is conserved up to $\bigo(\delta x^l)+\bigo(\delta t^k)+\bigo(h)$ discretization errors.

\subsection{Time discretization of velocity-jump processes \label{sec:discr}}

When simulating equation~\eqref{eq:process_noscale}, a time discretization error originates from the fact that the equation for the evolution of the internal state, and hence the evolution of the fluctuations $Z_t$, is discretized in time.  This results in an approximation of the jump times $(T_n)_{n\ge 1}$, and hence of $X_t$. To retain the diffusion limits of the time-continuous process, special care is needed. We now briefly recall the time discretization procedure that was proposed and analyzed in the companion paper~\cite{limits}. For ease of exposition, we consider the scalar equation \eqref{e:scalar-y} for the internal state; generalization to nonlinear systems of equations is briefly discussed in~\cite{limits}. 

Consider first the linear turning rate \eqref{eq:lin_rate}. In that case, we define a numerical solution  $(\disc{X}_t,\disc{V}_t,\disc{Y}_t)$ as follows.
Between jumps, we discretize the simulation in steps of size $\delta t$ and denote by $(\disc{X}_{n,k},\disc{Z}_{n,k})$ the solution at $t_{n,k}=\disc{T}_n+k\delta t$.  The numerical solution for $t\in[t_{n,k},t_{n,k+1}]$ is given by 
\begin{equation} \label{e:discr-nonlin}
  \begin{cases}
    \disc{X}_t = \disc{X}_{n,k}+\eps \Vj_{n}\;(t-t_{n,k})  \\
    \disc{Z}_t = \exp(-(t-t_{n,k}) \taueps^{-1}) \disc{Z}_{n,k}  + \eps \taueps \pare{\Id-\exp(-(t-t_{n,k})\taueps^{-1})} \nabla S(\disc{X}_{n,k}) \, \Vj_n.
  \end{cases}
\end{equation}
We denote by $K\geq 0$ the integer such that the simulated jump time $\disc{T}_{n+1}\in [t_{n,K},t_{n,K+1}]$. To find $\disc{T}_{n+1}$, we first approximate the integral $ \int_{\disc{T}_n}^{\disc{T}_{n+1}}\lambda(\disc{Z}_t)\d t$ using 
\begin{align}\label{eq:time-disc-lin}
	\int_{\disc{T}_n}^{\disc{T}_{n+1}}\lambda(\disc{Z}_t)\d t &=\sum_{k=0}^{K-1}\int_{t_{n,k}}^{t_{n,k+1}}\lambda(\disc{Z}_t)\d t 
	+ \int_{t_{n,K}}^{\disc{T}_{n+1}}\lambda(\disc{Z}_t)\d t ,
\end{align}
and then compute~:
\begin{multline}
	\int_{t_{n,k}}^{t_{n,k+1}}\lambda(\disc{Z}_t)\d t= 
	\lambda_0 \delta t - b^T \pare{\Id - \e^{- {\delta t}{\taueps^{-1}}} }{\taueps} \disc{Z}_{T_n}  \\
 - \eps b^T \pare{\delta t \taueps -(\Id-\e^{-\delta t \taueps^{-1} } )\taueps^2   }  \nabla S(\disc{X}_{n,k}) \Vj_n .
\end{multline}
The jump time $\disc{T}_{n+1}$ can then be computed as the solution of 
\begin{equation}\label{e:lin-Newton}
\int_{t_{n,K}}^{\disc{T}_{n+1}}\lambda(\disc{Z}_t)\d t=\theta_{n+1}- \sum_{k=0}^{K-1}\int_{t_{n,k}}^{t_{n,k+1}}\lambda(\disc{Z}_t)\d t,
\end{equation}
using a Newton procedure. It is shown in \cite{limits} that the results on the diffusion limit (as outlined in Section \ref{sec:model-hydro}) are not affected by the discretization. 

We now consider a general nonlinear turning rate. We again discretize in time to obtain the time-discrete solution \eqref{e:discr-nonlin}.  The jump time $\disc{T}_{n+1}$ is now computed by linearizing \eqref{eq:rate} in each time step, 
\begin{equation}\label{eq:nonlinrate_disc}
\disc{\lambda}\pare{\disc{Z}_t,\disc{Z}_{n,k}}=\lambda(\disc{Z}_{n,k})+ \frac{\d \lambda(\disc{Z}_{n,k})}{\d z}\pare{\disc{Z}_t-\disc{Z}_{n,k}}.
\end{equation}
One then replaces equation \eqref{eq:time-disc-lin} by 
\begin{align}
	\int_{\disc{T}_n}^{\disc{T}_{n+1}}\disc{\lambda}(\disc{Z}_t)\d t &=\sum_{k=0}^{K-1}\int_{t_{n,k}}^{t_{n,k+1}}\disc{\lambda}(\disc{Z}_t,\disc{Z}_{n,k})\d t 
	+ \int_{t_{n,K}}^{\disc{T}_{n+1}}\disc{\lambda}(\disc{Z}_t,\disc{Z}_{n,K})\d t,  
\end{align}
and proceeds in the same way as for the linear case.  Also in this case, the diffusive limit is recovered in an exact fashion for the time-discretized process.

\begin{remark}[Discretization of the control process]
For the control process, there is no internal state. Hence, the only time dependence of turning rate $\lambda^c$ is due to the spatial variation of $\nabla S(x)$, which can be treated by discretizing the integral of $\lambda$ in the same way as above.
\end{remark}

\section{Asymptotic variance reduction of the coupling\label{sec:variance}}

In this section, we show how the difference between the two coupled processes on diffusive time scales $\bar{t}=t/\eps^2$ behaves in the limit of $\eps\to 0$.  We first recall some notation from the companion paper~\cite{limits} (Section~\ref{sec:not}), after which we state and prove the main theorem (Section~\ref{sec:proof}).

\subsection{Notations and asymptotic estimates\label{sec:not}}

The main theorem in Section~\ref{sec:proof} relies on following auxiliary definitions and lemmas that were given in the companion paper~\cite{limits}:
\begin{defn}
  We denote by $m:\R \to \R^{n \times n}$ the function
  \begin{equation}
    \label{eq:m(t)}
    m(t) := t \taueps - \pare{\Id - {\rm e}^{-t \taueps^{-1}}} \taueps^2,
  \end{equation}
whose derivative is given by
\[
m'(t) = \taueps\pare{\Id - {\rm e}^{-t \taueps^{-1}} }.
\]
\end{defn}
This function satisfies the following lemmas~:
\begin{lem}
  Let $\theta$ be an exponential random variable of mean $1$. Then:
  \begin{equation}
    \label{eq:av_m}
    \E \pare{ m\left( \frac{\theta}{\lambda_0} \right)  } = \frac{1}{\lambda_0} \frac{\taueps}{\Id + \lambda_0 \taueps}=A_\eps(x),
  \end{equation}
see equation \eqref{eq:Adef}.
\end{lem}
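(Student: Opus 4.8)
The plan is to compute the expectation directly as a matrix-valued integral against the exponential density. Writing the density of $\theta$ as $\e^{-\theta}\,\d\theta$ on $[0,\infty)$ and substituting $t=\theta/\lambda_0$, I obtain
\[
\E\pare{m\pare{\frac{\theta}{\lambda_0}}} = \lambda_0 \int_0^\infty m(t)\,\e^{-\lambda_0 t}\,\d t .
\]
I would then expand the definition \eqref{eq:m(t)} as $m(t) = t\,\taueps - \taueps^2 + \e^{-t\taueps^{-1}}\taueps^2$ and split the integral into three pieces, the first two of which are elementary.

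The first two contributions are scalar integrals times constant matrices: $\lambda_0\taueps\int_0^\infty t\,\e^{-\lambda_0 t}\,\d t = \taueps/\lambda_0$ and $-\lambda_0\taueps^2\int_0^\infty \e^{-\lambda_0 t}\,\d t = -\taueps^2$. The third, and only genuinely matrix-valued, piece is $\lambda_0\taueps^2\int_0^\infty \e^{-t(\taueps^{-1}+\lambda_0\Id)}\,\d t$. The key identity I would invoke here is $\int_0^\infty \e^{-tM}\,\d t = M^{-1}$, valid whenever the semigroup $\e^{-tM}$ is integrable on $[0,\infty)$; with $M=\taueps^{-1}+\lambda_0\Id$, integrability follows from Assumption~\ref{a:2}, which gives exponential decay of $\norm{\e^{-t\taueps^{-1}}}$, together with $\lambda_0>0$. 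This evaluates the third piece to $\lambda_0\taueps^2(\taueps^{-1}+\lambda_0\Id)^{-1}$.

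It then remains to simplify $\taueps/\lambda_0 - \taueps^2 + \lambda_0\taueps^2(\taueps^{-1}+\lambda_0\Id)^{-1}$ to the claimed value $\lambda_0^{-1}\taueps(\Id+\lambda_0\taueps)^{-1}$. Here I would use that every matrix appearing is a function of $\taueps$, so all factors commute and the expression may be manipulated as a rational function of the single ``variable'' $\taueps$. Using the factorization $\taueps^{-1}+\lambda_0\Id = \taueps^{-1}(\Id+\lambda_0\taueps)$, hence $(\taueps^{-1}+\lambda_0\Id)^{-1}=(\Id+\lambda_0\taueps)^{-1}\taueps$, and collecting the three terms over the common factor $(\Id+\lambda_0\taueps)^{-1}$, the numerator telescopes to $\taueps$, which yields $\lambda_0^{-1}\taueps(\Id+\lambda_0\taueps)^{-1}$. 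The final identification with $A_\eps(x)$ is then precisely the definition \eqref{eq:Adef}.

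The main obstacle is the matrix-valued integral: one must justify both the convergence of $\int_0^\infty\e^{-t(\taueps^{-1}+\lambda_0\Id)}\,\d t$ and the identity with $(\taueps^{-1}+\lambda_0\Id)^{-1}$ in a genuinely non-symmetric setting, since Assumption~\ref{a:2} does not presuppose symmetry of $\taueps$. Once convergence is secured from the exponential bound of Assumption~\ref{a:2} and commutativity is noted (all matrices are rational functions of $\taueps$), the remaining algebra is routine and purely formal.
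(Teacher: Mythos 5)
Your computation is correct and complete: the substitution $t=\theta/\lambda_0$, the splitting of $m(t)=t\,\taueps-\taueps^2+\e^{-t\taueps^{-1}}\taueps^2$, the evaluation of the matrix exponential integral via $\int_0^\infty\e^{-tM}\,\d t=M^{-1}$ (with integrability supplied by Assumption~\ref{a:2} plus the extra factor $\e^{-\lambda_0 t}$), and the final algebra using that all factors are functions of $\taueps$ and hence commute, all check out and telescope to $\lambda_0^{-1}\taueps(\Id+\lambda_0\taueps)^{-1}$. The paper itself states this lemma without proof, deferring to the companion paper, so there is no in-paper argument to compare against; your direct calculation is the natural one and I see no gap. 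The only caveat is not yours but the paper's: the final identification with $A_\eps(x)$ in \eqref{eq:av_m} is a notational shortcut, since \eqref{eq:Adef} defines $A_\eps(x)=b^T\frac{\taueps}{\Id+\lambda_0\taueps}\nabla S(x)\in\R^d$, which differs from the matrix $\frac{1}{\lambda_0}\frac{\taueps}{\Id+\lambda_0\taueps}\in\R^{n\times n}$ you (correctly) obtain by the factors $b^T$, $\nabla S(x)$ and $1/\lambda_0$; what is actually used downstream (e.g.\ in Lemma~\ref{lem:DTestim}) is the identity $\E\pare{m(\theta/\lambda_0)}=\frac{1}{\lambda_0}\frac{\taueps}{\Id+\lambda_0\taueps}$, which is exactly what you proved.
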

\begin{lem}
For all $t \in \R$, we have
$\norm{ m'(t) } \leq t$, as well as
$\norm{ m(t) } \leq t^2/2$.
\end{lem}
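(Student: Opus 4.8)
The plan is to reduce both inequalities to the single contraction estimate $\norm{\e^{-s\taueps^{-1}}} \le 1$ (valid for $s \ge 0$) and then integrate twice. Since $m$ is applied only to arguments $t = \theta/\lambda_0 \ge 0$ in \eqref{eq:av_m}, I read the statement for $t \ge 0$ (for $t<0$ the inequality $\norm{m'(t)}\le t$ cannot hold literally, as the left side is nonnegative).

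First I would rewrite the given derivative as an integral. From $\frac{\d}{\d s}\e^{-s\taueps^{-1}} = -\taueps^{-1}\e^{-s\taueps^{-1}}$ one gets $\Id - \e^{-t\taueps^{-1}} = \int_0^t \taueps^{-1}\e^{-s\taueps^{-1}}\,\d s$, and since $\taueps$ commutes with $\e^{-s\taueps^{-1}}$ and $\taueps\taueps^{-1}=\Id$, the formula $m'(t) = \taueps\pare{\Id - \e^{-t\taueps^{-1}}}$ collapses to
\[
m'(t) = \int_0^t \e^{-s\taueps^{-1}}\,\d s .
\]
Passing the norm inside the integral and using $\norm{\e^{-s\taueps^{-1}}}\le 1$ then gives $\norm{m'(t)} \le \int_0^t \d s = t$, the first bound.

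For the second bound I would use $m(0)=0$ (immediate from \eqref{eq:m(t)}) together with the fundamental theorem of calculus, $m(t) = \int_0^t m'(s)\,\d s$, so that
\[
\norm{m(t)} \le \int_0^t \norm{m'(s)}\,\d s \le \int_0^t s\,\d s = \frac{t^2}{2}.
\]

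The only substantive ingredient is the contraction estimate $\norm{\e^{-s\taueps^{-1}}}\le 1$, and this is where I expect the real work. In the model case where $\taueps^{-1}$ is symmetric positive definite (the situation highlighted after Assumption~\ref{a:3}) it is immediate from the spectral theorem: the eigenvalues of $\e^{-s\taueps^{-1}}$ are $\e^{-s\mu}$ with $\mu>0$, hence lie in $(0,1]$, and the operator norm equals the largest of them. For a general non-symmetric invertible $\taueps$ this contraction property is \emph{not} automatic: it requires the logarithmic norm (matrix measure) of $-\taueps^{-1}$ to be nonpositive. I would therefore either adopt that as the operative hypothesis, or, following the remark after Assumption~\ref{a:3}, restrict the lemma to the symmetric positive case for which Assumptions~\ref{a:2}--\ref{a:3} were designed to hold.
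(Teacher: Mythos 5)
Your argument is correct and is the natural one; the paper itself states this lemma without proof (it is imported from the companion paper), so there is no in-text argument to compare against, but the integral representation $m'(t)=\int_0^t \e^{-s\taueps^{-1}}\,\d s$ followed by a second integration from $m(0)=0$ is exactly the standard route, equivalent to the scalar inequalities $1-\e^{-x}\le x$ and $x-1+\e^{-x}\le x^2/2$ pushed through the functional calculus. Your two caveats are both well taken and reflect imprecision in the lemma's statement rather than gaps in your proof: the claim can only be meant for $t\ge 0$ (which is all that is ever used, since $m$ is evaluated at $\theta/\lambda_0$), and the sharp constants $t$ and $t^2/2$ do require the contraction property $\norm{\e^{-s\taueps^{-1}}}\le 1$, which holds in the symmetric positive definite model case singled out after Assumption~\ref{a:3} but does not follow from Assumption~\ref{a:2} alone — that assumption only yields $\norm{\e^{-s\taueps^{-1}}}\le C$ and hence $\norm{m'(t)}\le Ct$, $\norm{m(t)}\le Ct^2/2$. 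Since every downstream use of this lemma (the moment bounds on $d_{m+1}$ and $r_{m+1}$, and the expectation \eqref{eq:av_m}) is insensitive to such a multiplicative constant, either reading of the hypothesis is harmless; your choice to state the contraction property explicitly as the operative assumption is the cleaner one.
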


The difference between jump times is denoted as
\begin{equation*}
\Delta T_{n+1}^{c} :=  T_{n+1}^{c}- T_{n}^{c}, \qquad \Delta T_{n+1} :=  T_{n+1}- T_{n}.
\end{equation*}

The proofs of the asymptotic variance reduction make use of the following asymptotic expansions of the jump time differences of both processes~:
\begin{lem}\label{lem:dt-control}
The difference between two jump times of the control process satisfies
\begin{equation}\label{eq:deltaTc}
\Delta T_{n+1}^{c} = \frac{\theta_{n+1}}{\lambda_0} + \eps \frac{\theta_{n+1}}{\lambda_0^2} A_\eps^T(X_{T^c_n}^c) \Vj_n + \theta_{n+1} \bigo (\eps^2  ).
\end{equation}
\end{lem}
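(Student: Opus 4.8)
The plan is to exploit the fact that, between consecutive jumps, the control process is completely explicit. On the interval $[T_n^c,T_{n+1}^c]$ the velocity is frozen at $\Vj_n$, so the position is the straight line $X_t^c = X_{T_n^c}^c + \eps\Vj_n(t-T_n^c)$. Substituting this together with $V_t^c=\Vj_n$ into the jump relation in \eqref{eq:cprocess} and setting $s=t-T_n^c$, the defining condition becomes the scalar equation
\[
\int_0^{\Delta T_{n+1}^c}\lambda^c_\eps\pare{X_{T_n^c}^c + \eps\Vj_n s,\, \Vj_n}\,\d s = \theta_{n+1}.
\]
First I would record the elementary a priori bound obtained from \eqref{eq:rateboundc}, namely $\theta_{n+1}/\lambda_{\rm max}\le \Delta T_{n+1}^c\le \theta_{n+1}/\lambda_{\rm min}$, so that $\Delta T_{n+1}^c=\bigo(\theta_{n+1})$; this is what will let me turn the implicit equation into an explicit expansion.

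Next I would insert the expansion \eqref{eq:control_rate} of the turning rate and Taylor-expand $A_\eps$ along the segment. Since $A_\eps$ depends on $x$ only through $\nabla S(x)$, and $S$ has bounded second derivatives (with $c_S=\norm{\nabla^2 S}_\infty$) while the matrix prefactor $\taueps/(\Id+\lambda_0\taueps)$ is bounded uniformly in $\eps$, the field $A_\eps$ is Lipschitz with a constant independent of $\eps$. Hence $A_\eps(X_{T_n^c}^c+\eps\Vj_n s)=A_\eps(X_{T_n^c}^c)+\bigo(\eps s)$ (using $\abs{\Vj_n}=1$). Carrying this through the integrand yields
\[
\int_0^{\Delta T_{n+1}^c}\lambda^c_\eps\,\d s = \lambda_0\,\Delta T_{n+1}^c - \eps\,A_\eps^T(X_{T_n^c}^c)\Vj_n\,\Delta T_{n+1}^c + \bigo\pare{\eps^2\,\Delta T_{n+1}^c} + \bigo\pare{\eps^2\,(\Delta T_{n+1}^c)^2},
\]
where the first remainder comes from the $\bigo(\eps^2)$ term in \eqref{eq:control_rate} and the second from the spatial variation of $A_\eps$ along the trajectory.

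The final step is to invert this relation by bootstrapping. Setting the left-hand side equal to $\theta_{n+1}$ and dividing by $\lambda_0$ gives $\Delta T_{n+1}^c = \theta_{n+1}/\lambda_0 + \eps\lambda_0^{-1}A_\eps^T(X_{T_n^c}^c)\Vj_n\,\Delta T_{n+1}^c + \bigo(\eps^2\Delta T_{n+1}^c)+\bigo(\eps^2(\Delta T_{n+1}^c)^2)$. Substituting the zeroth-order value $\Delta T_{n+1}^c = \theta_{n+1}/\lambda_0 + \bigo(\eps\theta_{n+1})$ into the $\bigo(\eps)$ correction on the right, and using the a priori bound $\Delta T_{n+1}^c=\bigo(\theta_{n+1})$ to control the remainders, collapses everything to \eqref{eq:deltaTc}.

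The main obstacle I anticipate is bookkeeping of the $\theta_{n+1}$-dependence of the error rather than any deep difficulty: the spatial-variation term contributes at order $\eps^2(\Delta T_{n+1}^c)^2=\bigo(\eps^2)\,\theta_{n+1}^2$, which is quadratic in $\theta_{n+1}$ and therefore only fits the stated remainder $\theta_{n+1}\bigo(\eps^2)$ if one reads it, as elsewhere in the paper, up to polynomial growth in $\theta_{n+1}$; since all moments of the exponential variable $\theta_{n+1}$ are finite, this is harmless for the expectations taken downstream. The two points that require genuine care are therefore (i) establishing the $\eps$-uniform Lipschitz bound on $A_\eps$, so that the $\bigo(\eps s)$ expansion is legitimate with a constant independent of $\eps$, and (ii) the a priori control of $\Delta T_{n+1}^c$ via \eqref{eq:rateboundc}, which is what makes the perturbative inversion rigorous.
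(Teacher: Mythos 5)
Your argument is correct and is the standard perturbative inversion of the jump-time equation that this lemma calls for; the paper itself does not reprove it here but imports it from the companion paper, and your route (a priori bound $\Delta T_{n+1}^c=\bigo(\theta_{n+1})$ from \eqref{eq:rateboundc}, Lipschitz expansion of $A_\eps$ along the straight-line trajectory, then bootstrap) is exactly the expected one. Your own caveat about the remainder is the only point worth flagging: the spatial variation of $A_\eps$ genuinely produces a $\theta_{n+1}^2\bigo(\eps^2)$ contribution, so the error should strictly be read as $(\theta_{n+1}^2+\theta_{n+1})\bigo(\eps^2)$, which is harmless downstream since the analogous remainder in Lemma~\ref{lem:DTestim} already carries polynomial factors of $\theta_{n+1}$ and all moments of $\theta_{n+1}$ are finite.
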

and
\begin{lem}\label{lem:DTestim} 
The jump time variations of the process with internal state can be written in the following form ($\nl(\eps)$ being defined by~\eqref{eq:nl_err})~:
\begin{equation}\label{eq:deltaT}
  \Delta T_{n+1} = \Delta T_{n+1}^0 + \eps \Delta T_{n+1}^1 + (\theta_{n+1}^6+\theta_{n+1})\bigo \pare{\eps^2+ \eps\,\nl(\eps)} ,
\end{equation}
where 
\begin{align}\label{eq:estimDT0}
\Delta T_{n+1}^0 &= \frac{\theta_{n+1}}{\lambda_0} + \frac{b^T}{\lambda_0} m'(\Delta T_{n+1}^0 ) Z_{T_n} \nonumber \\   
&= \frac{\theta_{n+1}}{\lambda_0} + \theta_{n+1} \bigo ( \eps^\delta) 
\end{align}
and, correspondingly, 
\begin{align}\label{eq:estimDT1}
  \Delta T_{n+1}^1 &= \frac{1}{\lambda_0 - b^T \e^{- {\Delta T_{n+1}^0}{\tau^{-1}}} Z_{T_n} } b^T m(\Delta T_{n+1}^0) \nabla  S(X_{T_n})  \Vj_{n}\nonumber  \\ 
& = \frac{1}{\lambda_0}   b^T m\pare{\frac{\theta_{n+1}}{\lambda_0}} \nabla S(X_{T_n}) \Vj_{n} +  \theta_{n+1} \bigo(\eps^\delta) .
\end{align}
\end{lem}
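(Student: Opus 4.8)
The plan is to read the implicit jump relation $\int_{T_n}^{T_{n+1}}\lambda(Z_t)\,\d t=\theta_{n+1}$ as a scalar equation for the increment $\Delta T_{n+1}$ and to solve it perturbatively in $\eps$, exploiting the explicit near-equilibrium dynamics of $Z_t$ on the free-flight interval $[T_n,T_{n+1}]$, where the velocity is frozen at $\Vj_n$ and $X_t=X_{T_n}+\eps\Vj_n(t-T_n)$. First I would differentiate $Z_t=S(X_t)-Y_t$ and insert Assumption~\ref{a:1} to obtain the linear ODE with forcing
\[
\frac{\d Z_t}{\d t}+\taueps^{-1}Z_t=\eps\,\nabla S(X_t)\Vj_n-\eps^{1-\delta}c_F\bigo\pare{\abs{Z_t}^2}.
\]
Variation of constants, together with the identity $\int_0^\tau\e^{-u\taueps^{-1}}\,\d u=m'(\tau)$, then gives, with $\tau:=t-T_n$,
\[
Z_{T_n+\tau}=\e^{-\tau\taueps^{-1}}Z_{T_n}+\eps\,m'(\tau)\nabla S(X_{T_n})\Vj_n+(\text{remainder}),
\]
where the remainder collects the variation of $\nabla S$ along the path (scale $c_S\eps$) and the quadratic nonlinearity of $F_\eps$, which is $c_F\bigo(\eps^{1+\delta})$ by the a priori bound $\abs{Z_t}=\bigo(\eps^\delta)$ recalled in~\eqref{eq:techbound}.

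Next I would substitute this expansion into the jump relation, using the linearization $\lambda(z)=\lambda_0-b^Tz+c_\lambda\bigo(\abs{z}^k)$ from~\eqref{eq:lin_rate}. Abbreviating $\Delta:=\Delta T_{n+1}$ and integrating the leading terms over $[0,\Delta]$ by means of $\int_0^\Delta\e^{-\tau\taueps^{-1}}\,\d\tau=m'(\Delta)$ and $\int_0^\Delta m'(\tau)\,\d\tau=m(\Delta)$ (recall $m(0)=0$), the relation collapses to the scalar fixed-point equation
\[
\lambda_0\,\Delta-b^Tm'(\Delta)Z_{T_n}-\eps\,b^Tm(\Delta)\nabla S(X_{T_n})\Vj_n=\theta_{n+1}+(\text{remainder}),
\]
where the remainder now also carries the contribution $c_\lambda\int_0^\Delta\bigo(\abs{Z_t}^k)\,\d\tau$ of the nonlinearity of $\lambda$.

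I would then solve this equation by the implicit function theorem in $\eps$. Defining $\Delta^0:=\Delta T_{n+1}^0$ to be the solution obtained when the $\eps$-term and the remainder are switched off yields exactly the implicit formula in~\eqref{eq:estimDT0}; since $\norm{m'(t)}\le t$ and $Z_{T_n}=\bigo(\eps^\delta)$, a contraction argument gives $\Delta^0=\theta_{n+1}/\lambda_0+\theta_{n+1}\bigo(\eps^\delta)$. Linearizing the left-hand side at $(\Delta^0,0)$, its $\Delta$-derivative equals $\lambda_0-b^Tm''(\Delta^0)Z_{T_n}=\lambda_0-b^T\e^{-\Delta^0\taueps^{-1}}Z_{T_n}$ (using $m''(t)=\e^{-t\taueps^{-1}}$), while its $\eps$-derivative equals $-b^Tm(\Delta^0)\nabla S(X_{T_n})\Vj_n$; the ratio of these is precisely $\Delta T_{n+1}^1$ as stated in~\eqref{eq:estimDT1}, and the simplified second line follows by replacing $\Delta^0$ with $\theta_{n+1}/\lambda_0$ and the denominator with $\lambda_0$, up to $\bigo(\eps^\delta)$.

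The hard part will be the error bookkeeping that produces the precise remainder $(\theta_{n+1}^6+\theta_{n+1})\bigo(\eps^2+\eps\,\nl(\eps))$. One must track simultaneously the three elementary nonlinear scales $c_F\eps^\delta$, $c_S\eps$ and $c_\lambda\eps^{k\delta-1}$ that make up $\nl(\eps)$, and the polynomial growth in $\theta_{n+1}$ inherited from $\Delta T_{n+1}\sim\theta_{n+1}/\lambda_0$ and from the bounds $\norm{m'(t)}\le t$, $\norm{m(t)}\le t^2/2$; it is the products of these monomial bounds appearing in the second-order Taylor remainder of the fixed-point map that generate the (possibly non-optimal) high power $\theta_{n+1}^6$. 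This step is where Assumption~\ref{a:3} and the refined estimate $\sup_t\abs{\taueps^{-1}\e^{-\taueps^{-1}t}Z_t}=\bigo(1)$ become essential: they tame the singular factors $\taueps^{-1}$ that appear when differentiating $m$ and $\e^{-\tau\taueps^{-1}}$, keeping every constant uniform in $\eps$. I would organize the estimate so that each contribution is bounded by an explicit monomial in $\theta_{n+1}$ times one of the elementary scales, and then collect them into the stated form.
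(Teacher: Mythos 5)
Your proposal follows essentially the same route as the paper's: the paper itself defers the proof of this lemma to the companion paper \cite{limits}, but the machinery visible here (the Duhamel expansion \eqref{eq:key_Duhamel} used in the proof of Lemma~\ref{lem:DTestimbis}, which is explicitly said to follow the proof of this lemma in \cite{limits}) is exactly your variation-of-constants formula for $Z_t$ followed by substitution into the linearized rate \eqref{eq:lin_rate} and a perturbative solution of the implicit equation for $\Delta T_{n+1}$, with $m'$ and $m$ arising from the same integrals you identify. Your outline is correct, with the caveat that the error bookkeeping producing the exact remainder $(\theta_{n+1}^6+\theta_{n+1})\bigo(\eps^2+\eps\,\nl(\eps))$ is only described, not carried out.
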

It is also useful to recall that according to \eqref{eq:ratebound}-\eqref{eq:rateboundc}, the following hold~:
\[
\abs{\Delta T_{n+1}} \leq C \theta_{n+1}, \qquad \abs{\Delta T_{n+1} ^c} \leq C \theta_{n+1}.
\]
Finally, we will also need a different expansion of the jump times of the process with internal state~:
\begin{lem}\label{lem:DTestimbis}
When using $\nl_2(\eps)$ as defined by~\eqref{eq:nl_err}, the jump time variations of the process with internal state can be written in the following form~:
\begin{equation}\label{eq:deltaTbis}
  \begin{split}
&     \Delta T_{n+1} = \frac{\theta_{n+1}}{\lambda_0} - \frac{b^T \taueps}{\lambda_0}(Z_{T_{n+1}}-Z_{T_{n}})+ \eps \Delta T _{0 } b^T \taueps \nabla S (X_{T_n}) \Vj_n \\
& + \pare{\theta_{n+1}^3+\theta_{n+1}}\bigo \pare{ \eps\,\nlbis(\eps) + \eps^{1+\delta} }.
  \end{split}
 \end{equation}
\end{lem}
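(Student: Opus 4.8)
The plan is to start from the implicit definition of the jump time, $\int_{T_n}^{T_{n+1}}\lambda(Z_t)\,\d t=\theta_{n+1}$, and to evaluate the integral in closed form up to controlled remainders. Linearizing the rate through~\eqref{eq:lin_rate} gives $\theta_{n+1}=\lambda_0\,\Delta T_{n+1}-b^T\int_{T_n}^{T_{n+1}}Z_t\,\d t+c_\lambda\int_{T_n}^{T_{n+1}}\bigo(\abs{Z_t}^k)\,\d t$, so that, after solving for $\Delta T_{n+1}$ and dividing by $\lambda_0$, everything reduces to computing the drift integral $\int_{T_n}^{T_{n+1}}Z_t\,\d t$. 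Since $\sup_t\abs{Z_t}=\bigo(\eps^\delta)$, the nonlinear rate term is bounded by $c_\lambda\,\theta_{n+1}\,\bigo(\eps^{k\delta})$, which already lies in $(\theta_{n+1}^3+\theta_{n+1})\bigo(\eps\,\nlbis(\eps))$ by the definition~\eqref{eq:nl_err}; the whole difficulty is therefore concentrated in $\int Z_t\,\d t$.

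The decisive step is to replace $Z_t$ inside the integral by $\taueps\Feps(Y_t,S(X_t))=\taueps\,\d Y_t/\d t$. Because $\abs{Z_t}=\bigo(\eps^\delta)$ uniformly on $[T_n,T_{n+1}]$, Assumption~\ref{a:1bis} provides the pointwise relation $Z_t=\taueps\,\d Y_t/\d t+c_F\bigo(\eps^{1+\gamma})$, which turns the integral into an exact telescoping quantity: $\int_{T_n}^{T_{n+1}}Z_t\,\d t=\taueps\pare{Y_{T_{n+1}}-Y_{T_n}}+c_F\,\Delta T_{n+1}\,\bigo(\eps^{1+\gamma})$. I would then rewrite the increment through $Y=S(X)-Z$, namely $Y_{T_{n+1}}-Y_{T_n}=\pare{S(X_{T_{n+1}})-S(X_{T_n})}-\pare{Z_{T_{n+1}}-Z_{T_n}}$, and use that the velocity is frozen at $\Vj_n$ on the interval, so that $X_{T_{n+1}}-X_{T_n}=\eps\,\Delta T_{n+1}\,\Vj_n$. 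A second-order Taylor expansion of $S$, controlled by $c_S=\norm{\nabla^2 S}_\infty$, gives $S(X_{T_{n+1}})-S(X_{T_n})=\eps\,\Delta T_{n+1}\,\nabla S(X_{T_n})\Vj_n+c_S\,\bigo(\eps^2\Delta T_{n+1}^2)$. Collecting these produces exactly the two structural terms of~\eqref{eq:deltaTbis}: the telescoping part $-\frac{b^T\taueps}{\lambda_0}\pare{Z_{T_{n+1}}-Z_{T_n}}$ and the drift part proportional to $\eps\,\Delta T_{n+1}\,b^T\taueps\,\nabla S(X_{T_n})\Vj_n$.

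What remains is bookkeeping. Since the drift term already carries a factor $\eps$, I would replace $\Delta T_{n+1}$ by $\Delta T_{n+1}^0$ there using Lemma~\ref{lem:DTestim}; the correction $\Delta T_{n+1}-\Delta T_{n+1}^0=\eps\,\Delta T_{n+1}^1+\dots=\bigo(\eps\,\theta_{n+1}^2)$ then contributes a pure remainder of order $\eps^{1+\delta}\theta_{n+1}^2$, which is of the type $(\theta_{n+1}^3+\theta_{n+1})\bigo(\eps^{1+\delta})$. The remaining error terms are the curvature remainder $c_S\,\bigo(\eps^{1+\delta}\theta_{n+1}^2)$ (using $\norm{\taueps}\le C\eps^{\delta-1}$), the Assumption~\ref{a:1bis} remainder $c_F\,\bigo(\eps^{1+\gamma}\theta_{n+1})$, and the rate remainder $c_\lambda\,\bigo(\eps^{k\delta}\theta_{n+1})$; each is subsumed by $(\theta_{n+1}^3+\theta_{n+1})\bigo(\eps\,\nlbis(\eps))$ once one uses $\theta^2\le\theta^3+\theta$ and reads off~\eqref{eq:nl_err}. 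It is worth noting that the extra $c_F\eps^\gamma$ contribution present in $\nlbis(\eps)$ but absent from $\nl(\eps)$ is exactly what is needed to absorb the Assumption~\ref{a:1bis} remainder.

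The hard part is not any single computation but, first, the fact that the clean telescoping identity is unavailable under Assumption~\ref{a:1} alone: that assumption only gives $Z_t=\taueps\,\d Y_t/\d t+c_F\bigo(\eps^{2\delta})$, and the resulting $\bigo(\eps^{2\delta})$ error does not fit into $\eps\,\nlbis(\eps)$ when $2\delta<1+\gamma$, which is precisely the gap that Assumption~\ref{a:1bis} is designed to close. Second, one must cope with the implicit, self-referential character of the identity, since $\Delta T_{n+1}$ reappears inside the drift term and $Z_{T_{n+1}}$ itself depends on the jump. I would handle the latter by deliberately leaving $Z_{T_{n+1}}-Z_{T_n}$ unexpanded — this is exactly the feature that makes~\eqref{eq:deltaTbis}, rather than the expansion of Lemma~\ref{lem:DTestim}, the natural input to the later time-shift estimate — and by treating the $\eps$-order drift term perturbatively through the already-established first-order expansion.
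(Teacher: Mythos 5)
Your proof is correct and reaches the same key intermediate identity as the paper, but by a genuinely different route. The paper starts from the explicit Duhamel representation of $Z_t$ on $[T_n,T_{n+1}]$ (equation~(4.24) of~\cite{limits}), multiplies it by $\taueps$, evaluates at $t=T_{n+1}$, integrates the same formula over the interval, and adds the two so that the exponential terms cancel, yielding $\int_{T_n}^{T_{n+1}}Z_t\,\d t=-\taueps(Z_{T_{n+1}}-Z_{T_n})+\eps\,\Delta T_{n+1}\taueps\nabla S(X_{T_n})\Vj_n+\text{rem}$. You obtain the same identity without any solution formula, by observing that Assumption~\ref{a:1bis} makes $Z_t-\taueps\,\d Y_t/\d t$ an $\bigo(\eps^{1+\gamma})$ error, so the integral telescopes to $\taueps(Y_{T_{n+1}}-Y_{T_n})$, and then expanding $Y=S(X)-Z$ with a second-order Taylor bound on $S$. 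This is in effect the structural explanation of the paper's cancellation: the combination ``integrated Duhamel plus endpoint evaluation'' is exactly the fundamental theorem of calculus applied to $\taueps Y_t$. Your version is more transparent about \emph{why} the increment $Z_{T_{n+1}}-Z_{T_n}$ appears unexpanded (which is indeed the feature exploited later in the time-shift estimate), and about why Assumption~\ref{a:1} alone would leave an unabsorbable $\bigo(\eps^{2\delta})$ error --- the precise point the paper flags as the reason for introducing Assumption~\ref{a:1bis}. The remaining bookkeeping (nonlinear rate term of order $c_\lambda\eps^{k\delta}\theta$, substitution $\Delta T_{n+1}\to\Delta T_{n+1}^0$ in the drift via Lemma~\ref{lem:DTestim}, absorption into $\nlbis$) coincides with the paper's; the only discrepancies are cosmetic --- your powers of $\theta_{n+1}$ in the intermediate remainders are slightly lower than those quoted from Lemma~4.8 of~\cite{limits} (harmless, since all moments of $\theta$ are finite), and your drift term carries the factor $1/\lambda_0$ that the algebra indeed produces.
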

\begin{proof}
  The proof is based on Duhamel integration of the ODE~\eqref{eq:process_noscale} on $[T_n,T_{n+1}]$, as in the proof of Lemma 4.7 of the companion paper~\cite{limits}. Following equation~(4.24) in~\cite{limits}, we get for $t\in [T_n,T_{n+1}]$~:
\begin{align}\label{eq:key_Duhamel}
  Z_t = \e^{- \pare{t-T_n}{\taueps^{-1}}} Z_{T_n} + \eps \pare{\Id - \e^{- \pare{t-T_n}{\taueps^{-1}}} }{\taueps} \nabla S(X_{T_n})  \Vj_{n} + (\theta_{n+1}^2+\theta_{n+1})\bigo \pare{c_S \eps^2+ c_F \eps^{1+\delta} }.
\end{align}
Multiplying both sides by $\tau_\eps$, recalling that Assumption~\ref{a:2} implies $\|\tau_\eps\|=\bigo(\eps^{\delta-1})$, and using Assumption~\ref{a:1bis}, we get for $t\in [T_n,T_{n+1}]$~:
\begin{align}\label{e:help}
 \taueps Z_t = \taueps \e^{- \pare{t-T_n}{\taueps^{-1}}} Z_{T_n} + \eps \pare{\Id - \e^{- \pare{t-T_n}{\taueps^{-1}}} }{\taueps^2} \nabla S(X_{T_n})  \Vj_{n} + (\theta_{n+1}^2+\theta_{n+1})\bigo \pare{c_S \eps^{1+\delta}+ c_F \eps^{1+\gamma} }.
\end{align}
Moreover, integrating~\eqref{eq:key_Duhamel} on $t\in [T_n,T_{n+1}]$ yields
\begin{align}\label{eq:key_Duhamelint}
  \begin{split}
   \int_{T_n}^{T_{n+1}} Z_t \d t &= \pare{\Id - \e^{- \pare{T_{n+1}-T_n}{\taueps^{-1}}} }{\taueps} \pare{Z_{T_n} -\eps\taueps \nabla S(X_{T_n}) \Vj_{n} } \\
& \quad +\eps (T_{n+1}-T_n)\taueps \nabla S(X_{T_n})  \Vj_{n} + (\theta_{n+1}^3+\theta_{n+1}^2)\bigo \pare{c_S \eps^2+ c_F \eps^{1+\delta} }. 
\end{split}
\end{align}
Evaluating equation~\eqref{e:help} at $t=T_{n+1}$, add adding equation~\eqref{eq:key_Duhamelint}, we get
\begin{align*}
  \int_{T_n}^{T_{n+1}} Z_t \d t + \taueps Z_{T_{n+1}}  = & \taueps Z_{T_n} +\eps (T_{n+1}-T_n)\taueps \nabla S(X_{T_n})  \Vj_{n}  \\
& \quad  + \pare{\theta_{n+1}^3+\theta_{n+1}}\bigo \pare{c_S(\eps^{1+\delta}+\eps^2 ) + c_F(\eps^{1+\delta}+\eps^{1+\gamma}) } . 
\end{align*}
The estimates in Lemma~$4.8$ from~\cite{limits} (i.e., $\abs{ \Delta T_{n+1}-\Delta T_{n+1}^0} = (\theta_{n+1}^4+\theta_{n+1})\bigo(\eps) $) yields
\begin{align*}
  \int_{T_n}^{T_{n+1}} Z_t \d t + \taueps Z_{T_{n+1}}  = & \taueps Z_{T_n} +\eps \Delta T_{n+1}^0 \taueps \nabla S(X_{T_n})  \Vj_{n}  \\
& \quad  + \pare{\theta_{n+1}^3+\theta_{n+1}}\bigo \pare{c_S(\eps^{1+\delta}+\eps^2 ) + (c_F+1)\eps^{1+\delta}+c_F\eps^{1+\gamma}) } . 
\end{align*}

Finally, plugging the last equation in the estimate~\eqref{eq:lin_rate} yields the result.
\end{proof}

\subsection{Analysis of variance of the coupling\label{sec:proof}}
The present section is devoted to the proof of the following theorem:
\begin{thm}\label{thm:coupling}
Assume the assumptions of Section~\ref{sec:asympt} hold, and that $k \delta > 1$ where $k\geq 2$ is defined in~\eqref{eq:lin_rate}. Then the difference between the process with internal state and the coupling process satisfies~:
\begin{equation}
  \label{eq:coupl_estim}
   \E \pare{\pare { X_{\bt/\eps^2} - X^c_{\bt/\eps^2}  } ^ p}^{1/p} = \bigo( \eps + \eps^\delta + \nlbis(\eps)  ), \qquad p\ge 1,
\end{equation}
where $\nlbis(\eps)$ is defined in~\eqref{eq:nl_err}.
\end{thm}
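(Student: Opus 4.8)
The plan is to reduce the position difference to a telescoping sum over the jumps and then separate the conditionally centered (martingale) increments, controlled by their quadratic variation, from the predictable drifts, which must cancel between the two processes up to a Gronwall remainder. Fix a physical time $t=\bt/\eps^2$ and let $N$ be the (random) number of jumps of the internal-state process before $t$, so that $N=\bigo(\eps^{-2})$ with all moments finite, since the rate is bounded in $[\lambda_{\rm min},\lambda_{\rm max}]$. Because both processes share the velocities $\pare{\Vj_n}$ and move at speed $\eps$, I write
$$X_t - X^c_t = \eps\sum_{n=0}^{N-1}\Vj_n\pare{\Delta T_{n+1}-\Delta T^c_{n+1}} + \bigo(\eps) + \bigo\pare{\eps\abs{T_N - T^c_N}},$$
where the $\bigo(\eps)$ collects the two partial inter-jump displacements $X_t-X_{T_N}$ and $X^c_{T^c_N}-X^c_t$, and the last term accounts for the time shift $T_N-T^c_N=\sum_n\pare{\Delta T_{n+1}-\Delta T^c_{n+1}}$ incurred because the processes jump at different instants. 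It therefore suffices to bound in $L^p$ the weighted sum $\eps\sum_n\Vj_n(\Delta T_{n+1}-\Delta T^c_{n+1})$ and the unweighted sum $\eps\sum_n(\Delta T_{n+1}-\Delta T^c_{n+1})$.

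For the weighted (position) sum I substitute $\Delta T^c_{n+1}$ from Lemma~\ref{lem:dt-control} and $\Delta T_{n+1}$ from Lemma~\ref{lem:DTestim}; the leading terms $\theta_{n+1}/\lambda_0$ cancel exactly. I split each increment into its conditional expectation given the history $\mathcal{F}_{T_n}$ and a centered remainder. Since $\Vj_n$ is centered and independent of $\theta_{n+1}$ and of $\mathcal{F}_{T_n}$, every contribution not carrying a factor $\Vj_n$ has zero drift; in particular the $\bigo(\eps^\delta)$ term $Z_{T_n}$ in $\Delta T^0_{n+1}$ feeds only the martingale part. The martingale part is controlled by Burkholder--Davis--Gundy: its $p$-th moment is at most $\sqrt{N}$ times the per-step size, giving $\eps^{-1}\cdot\bigo(\eps^{1+\delta})=\bigo(\eps^\delta)$ from the $\bigo(\eps^\delta)$-terms and $\eps^{-1}\cdot\bigo(\eps^2)=\bigo(\eps)$ from the genuine $\eps$-order terms, all moments being finite because $\theta_{n+1}$ is exponential. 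The crucial cancellation is in the drift: the $\eps$-order conditional means of the two processes are $\eps^2\lambda_0^{-2}D\,A_\eps(X_{T_n})$ and $\eps^2\lambda_0^{-2}D\,A_\eps(X^c_{T^c_n})$, and they coincide precisely because $A_\eps$ was built, via \eqref{eq:Adef} and the averaging identity~\eqref{eq:av_m}, to make the drifts agree, together with $\E(\Vj\otimes\Vj)=D$. Summing their difference over the $\bigo(\eps^{-2})$ jumps leaves $\bigo\pare{\sup_{s\le t}\abs{X_s-X^c_s}}$ by Lipschitz continuity of $A_\eps$, absorbed by a discrete Gronwall argument, plus error terms $\bigo(\eps^2+\eps\,\nl(\eps)+\eps^{1+\delta})$ whose sum is $\bigo(\eps+\eps^\delta+\nl(\eps))$.

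For the time-shift sum I instead use Lemma~\ref{lem:DTestimbis}, whose purpose is to expose the telescoping increment $-\lambda_0^{-1}b^T\taueps(Z_{T_{n+1}}-Z_{T_n})$. Summing collapses it to the boundary value $-\lambda_0^{-1}b^T\taueps(Z_{T_N}-Z_0)$, which may be large when $\delta$ is small but is harmless: it enters the position only through the prefactor $\eps$, and $\eps\cdot\bigo(\taueps\eps^\delta)=\bigo(\eps^{2\delta})\le\bigo(\eps^\delta)$. The remaining order-$\eps$ and error terms are treated exactly as above and, after multiplication by $\eps$, fall within the claimed order; the appearance of $\nlbis(\eps)$ rather than $\nl(\eps)$ traces to the slightly larger remainders carried by Lemma~\ref{lem:DTestimbis} and Assumption~\ref{a:1bis}. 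Passing from the random index $N$ back to the diffusive time $\bt$ and taking the supremum in $t$ before closing the Gronwall inequality yields \eqref{eq:coupl_estim} for every $p\ge1$.

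The main obstacle is this drift cancellation combined with the Gronwall closure: one must verify that, after weighting by $\Vj_n$ and conditioning, the systematic order-$\eps$ drifts of the two coupled processes agree up to a term Lipschitz in the instantaneous position difference, so that the $\bigo(\eps^{-2})$ per-step drifts do not accumulate beyond the advertised order. Keeping careful track of which remainders are conditionally centered (and so gain the martingale $\sqrt{N}$ rather than $N$) versus genuinely predictable is the heart of the estimate; the time-shift contribution, handled by the telescoping form of Lemma~\ref{lem:DTestimbis} and damped by the $\eps$ velocity prefactor, is a secondary but genuinely necessary complication.
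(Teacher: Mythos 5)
Your proposal follows essentially the same route as the paper's proof: the same reduction to jump-indexed positions $\Xi_n,\Xi_n^c$ plus an $\eps\abs{T_N-T_N^c}$ time-shift term, the same splitting of $\Delta T_{n+1}-\Delta T_{n+1}^c$ into a Lipschitz drift (cancelled via the averaging identity \eqref{eq:av_m} and closed by a discrete Gronwall argument), a martingale part controlled by Burkholder--Davis--Gundy, and a summable remainder, and the same telescoping use of Lemma~\ref{lem:DTestimbis} for the time shift. The only point you gloss over is the handling of the random jump count: the paper truncates at a deterministic $n_\eps=\bigo(\eps^{-2})$, shows the event $\{N>n_\eps\}$ contributes an exponentially small amount, and devotes a separate, genuinely non-trivial appendix lemma to bounding the moments of the size-biased variable $\theta_{N+1}$ --- a technical completion of your strategy rather than a change of approach.
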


The proof of Theorem~\ref{thm:coupling} relies on a number of steps that will be detailed by a series of lemmas. We will make use of the following random sequences, defined as the position of both processes after $n$ jumps,
\begin{equation}
  \label{eq:Xi}
  \Xi_n := X_{T_n}= X_0 + \eps \sum_{m=0}^{n-1}\Delta T _{m+1} \Vj_m, \qquad n\geq 0,
\end{equation}
as well as
\begin{equation}
  \label{eq:Xic}
  \Xi^c_n := X_{T^c_n}^c= X_0 + \eps \sum_{m=0}^{n-1}\Delta T _{m+1}^c \Vj_m, \qquad n\geq 0.
\end{equation}
We  will also make use of the following random integers~:
\begin{defn}\label{def:N_Nc}
  The random integers $N \geq 1$ and $N^c \geq 1$ are uniquely defined by:
\begin{equation}
  T_N \leq \bt / \eps^2 < T_{N+1}, \qquad T^c_{N^c} \leq \bt / \eps^2 < T^c_{N^c+1} .
\end{equation}
\end{defn}

The first lemma bounds the difference at time $\bt / \eps^2$ between the two processes $X_{\bt/\eps^2}$ and $X^c_{\bt/\eps^2}$ by expressing it in terms of  differences of positions and jump times of both processes after the same random number of jumps, $\Xi_n$ and $\Xi_n^c$~:
\begin{lem}
  The difference between the rescaled process with internal state $X^\eps_{\bt}:= X_{\bt/\eps^2}$ and the rescaled coupling process $X^{c,\eps}_{\bt}:= X^c_{\bt/\eps^2}$ satisfies:
  \begin{align}
\abs{X^\eps_{\bt} - X^{\eps,c}_{\bt}} & \leq  \abs{\Xi_{N}-\Xi_{N}^c} + \eps \abs{T_{N}-T_{N}^c}+\pare{\theta_{N+1}+\theta_{N_c+1}} \bigo(\eps) \label{eq:coupl_11} \\
&  \leq \abs{\Xi_{N^c}-\Xi_{N^c}^c} + \eps \abs{T_{N^c}-T_{N^c}^c}+\pare{\theta_{N+1}+\theta_{N_c+1}} \bigo(\eps) \label{eq:coupl_12} 
\end{align}
\end{lem}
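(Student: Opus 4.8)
The plan is to reduce the comparison of the two processes at the common deterministic time $t^\ast := \bt/\eps^2$ --- where they have undergone \emph{different} numbers of jumps ($N\neq N^c$ in general) --- to a comparison of $\Xi_n,\Xi_n^c$ and $T_n,T_n^c$ at a \emph{single} common jump index, paying only cheap remainders. The starting observation is that both trajectories are continuous and piecewise linear with unit-speed velocities ($\abs{V_s}=\abs{V_s^c}=1$, since $V_s,V_s^c\in\Sph^{d-1}$), so each is reconstructed exactly from its value at any of its jump times by integrating its velocity.

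First I would anchor at the index $N$. Since $T_N\leq t^\ast<T_{N+1}$, the internal-state velocity equals $\Vj_N$ on $[T_N,t^\ast]$, hence $X_{t^\ast}=\Xi_N+\eps(t^\ast-T_N)\Vj_N$. For the control process I would not use its velocity structure but simply the fundamental theorem of calculus from its $N$-th jump time, $X_{t^\ast}^c=\Xi_N^c+\eps\int_{T_N^c}^{t^\ast}V_s^c\,\d s$, valid with the natural sign convention whether or not $T_N^c\leq t^\ast$. Subtracting and splitting the control displacement through $T_N$, namely $\int_{T_N^c}^{t^\ast}=\int_{T_N^c}^{T_N}+\int_{T_N}^{t^\ast}$, produces exactly the term $\eps\abs{T_N-T_N^c}$ (bounding $\abs{\int_{T_N^c}^{T_N}V_s^c\,\d s}\leq\abs{T_N-T_N^c}$), while the remaining two contributions are each controlled by $\eps\abs{t^\ast-T_N}\leq\eps\,\Delta T_{N+1}\leq\eps C\,\theta_{N+1}$, using the recalled estimate $\abs{\Delta T_{n+1}}\leq C\theta_{n+1}$. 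Collecting terms gives the first inequality, the $2\eps C\theta_{N+1}$ remainder being absorbed into $(\theta_{N+1}+\theta_{N^c+1})\bigo(\eps)$.

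The second inequality follows by the symmetric argument, now anchoring at $N^c$: here $X_{t^\ast}^c=\Xi_{N^c}^c+\eps(t^\ast-T_{N^c}^c)\Vj_{N^c}$ (the control velocity is $\Vj_{N^c}$ on $[T_{N^c}^c,t^\ast]$), while $X_{t^\ast}=\Xi_{N^c}+\eps\int_{T_{N^c}}^{t^\ast}V_s\,\d s$ is split through $T_{N^c}^c$; this yields $\eps\abs{T_{N^c}-T_{N^c}^c}$ plus a remainder controlled by $\eps\abs{t^\ast-T_{N^c}^c}\leq\eps C\theta_{N^c+1}$. Note that each inequality is genuinely produced with only one of $\theta_{N+1},\theta_{N^c+1}$; the symmetric form $(\theta_{N+1}+\theta_{N^c+1})\bigo(\eps)$ is a harmless over-estimate, since both summands are nonnegative.

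The only real care needed --- and the single point worth stating precisely --- is the bookkeeping forced by $N\neq N^c$: one cannot directly pair the two displacements past $t^\ast$, and the decomposition must route the ``other'' process through the anchoring process's last jump time to make the $\eps\abs{T_N-T_N^c}$ (resp.\ $\eps\abs{T_{N^c}-T_{N^c}^c}$) term appear, while keeping the leftover interval lengths comparable to a single inter-jump time. Once this routing is set up, every bound follows from $\abs{V_s}=\abs{V_s^c}=1$ together with $\abs{\Delta T_{n+1}}\leq C\theta_{n+1}$, with no further estimates required.
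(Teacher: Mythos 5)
Your argument is correct, and it reaches the stated bounds by a genuinely different decomposition than the paper's. The paper writes each process in terms of its \emph{own} last jump index ($X^\eps_{\bt}=\Xi_N+\eps(\bt/\eps^2-T_N)\Vj_N$ and $X^{c,\eps}_{\bt}=\Xi^c_{N^c}+\eps(\bt/\eps^2-T^c_{N^c})\Vj_{N^c}$), then adds and subtracts $\Xi^c_N$, and must control the index-mismatch term $\abs{\Xi^c_{N^c}-\Xi^c_N}$ by a three-way case analysis on the ordering of $N$ and $N^c$, using the sandwich inequalities $T_N\le\bt/\eps^2<T^c_{N^c+1}$ and $T^c_{N^c}\le\bt/\eps^2<T_{N+1}$ to convert the control process's elapsed time $\abs{T^c_{N^c}-T^c_N}$ into $\abs{T_N-T^c_N}$ plus one inter-jump time. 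You instead anchor \emph{both} processes at the same index $N$, representing the control process by the fundamental theorem of calculus from $T^c_N$ and splitting the integral at $T_N$; since $\abs{V^c_s}=1$ this produces $\eps\abs{T_N-T^c_N}$ directly and the leftover pieces are each $\le\eps\abs{\bt/\eps^2-T_N}\le C\eps\,\theta_{N+1}$, with no case distinction needed. Both proofs rest on the same elementary fact (unit speed converts displacement over an interval into $\eps$ times its length), but your routing is cleaner: it sidesteps the case analysis entirely and in fact yields the slightly sharper remainder $\theta_{N+1}\bigo(\eps)$ for the first inequality (resp.\ $\theta_{N^c+1}\bigo(\eps)$ for the second), of which the symmetric form $\pare{\theta_{N+1}+\theta_{N^c+1}}\bigo(\eps)$ is, as you note, a harmless over-estimate.
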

Only one of the two estimates \eqref{eq:coupl_11}-\eqref{eq:coupl_12} is necessary in the remainder of the proof, but we detail both to highlight the symmetry.
\begin{proof}
  By definition, we have
  \begin{equation*}
    \begin{cases}
      X^\eps_{\bt}= \Xi_N+ \eps(\bt/\eps^2-T_N)\Vj_{N}, \\
      X^{c,\eps}_{\bt}= \Xi_{N^c}^c+ \eps(\bt/\eps^2-T_{N^c}^c)\Vj_{N},
    \end{cases}
  \end{equation*}
so that by Definition~\ref{def:N_Nc} of $N$ and $N^c$, and by realizing that $\abs{\bt/\eps^2-T_{N^c}^c} \leq C \theta_{N_c+1}$ and $\abs{\bt/\eps^2-T_{N}} \leq C \theta_{N+1}$,  we get
\begin{equation}\label{eq:diffX}
    \begin{cases}
     \abs{ X^\eps_{\bt}- X^{c,\eps}_{\bt}}\leq \abs{\Xi_N-\Xi^c_N}+\underbrace{\abs{\Xi^c_{N^c}-\Xi^c_{N}}}_{(a)}+\pare{\theta_{N+1}+\theta_{N_c+1}} \bigo(\eps)  , \\
     \abs{ X^\eps_{\bt}- X^{c,\eps}_{\bt}} \leq \abs{\Xi_{N^c}-\Xi^c_{N^c}}+\underbrace{\abs{\Xi_{N^c}-\Xi_{N}}}_{(b)}+ \pare{\theta_{N+1}+\theta_{N_c+1}} \bigo(\eps).
    \end{cases}
  \end{equation}
To analyze $(a)$, we consider three cases:
\begin{itemize}
\item[$N=N_c$.] Then $(a)=0$.
\item[$N > N_c$.] Then 
  \begin{align*}
    \frac{(a)}{\eps} & = \abs{\sum_{n = N_c}^{N-1} \Delta T _{n+1}^c \Vj_n } \\
& \leq T _{N}^c-T_{N_c}^c=\Delta T^c_{N^c+1} + T^c_ N - T^c_{N^c+1}  \\
&\leq T^c_ N - T_{N} + C \theta_{N^c+1},
  \end{align*}
where in the last line we have used that, $\Delta T^c_{N^c+1}\le C\theta_{N^c+1}$, and, by definition, $ T_{N} \leq \bt/ \eps^2 < T^c_{N^c+1}$.
\item[$N_c > N$.] Then
  \begin{align*}
    \frac{(a)}{\eps} & = \abs{\sum_{n = N}^{N_c-1} \Delta T _{n+1}^c \Vj_n }  \\
& \leq T _{N_c}^c-T_{N}^c \\
&\leq T_{N+1} - T_{N}^c  = \Delta T_{N+1} + T_N-T_N^c \leq T_ N - T^c_{N} + C \theta_{N+1} ,
  \end{align*}
where in the last line we have used that by definition
 $ T^c_{N^c} \leq \bt/ \eps^2 \leq T_{N+1}$.
\end{itemize}
This proves~\eqref{eq:coupl_11}. By symmetry, we get:
\[
\frac{(b)}{\eps} \leq \abs{T_{N^c} - T^c_{N^c}} + C\pare{\theta_{N+1}+\theta_{N^c+1}},
\]
which yields~\eqref{eq:coupl_12}.
\end{proof}

In the next lemma, we estimate the supremum of the difference of the position of both processes after $n$ jumps for $n\in [0,n_\eps]$.
\begin{lem}
  Let $n_\eps \geq 1$ be a deterministic integer verifying $n_\eps = \bigo(\eps^{-2})$. Then 
  \begin{equation}
    \label{eq:estim_jumps}
    \E \pare{ \sup_{0 \leq n \leq n_\eps}\abs{\Xi_n-\Xi_n^c}^p } ^{1/p} = \bigo(\eps + \eps^\delta + \nl(\eps) ). 
  \end{equation}
\end{lem}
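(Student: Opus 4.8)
The plan is to regard $U_n:=\Xi_n-\Xi_n^c=\eps\sum_{m=0}^{n-1}\pare{\Delta T_{m+1}-\Delta T_{m+1}^c}\Vj_m$ as a perturbed discrete random walk and to decompose each increment, relative to the filtration $\mathcal{G}_m:=\sigma\pare{\theta_1,\Vj_0,\dots,\theta_m,\Vj_{m-1}}$, into a predictable \emph{drift} and a \emph{martingale} part. With this filtration $\Xi_m$, $\Xi_m^c$ and $Z_{T_m}$ are $\mathcal{G}_m$-measurable while $\Vj_m$ and $\theta_{m+1}$ are fresh, with $\E\pare{\Vj_m}=\int v\,\Max(dv)=0$ and $\E\pare{\Vj_m\otimes\Vj_m}=D$. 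The coupling is exploited first at the coarsest level: since both processes use the same clock $\theta_{m+1}$, the leading contributions $\theta_{m+1}/\lambda_0$ to $\Delta T_{m+1}$ and to $\Delta T_{m+1}^c$ cancel identically, so that $\Delta T_{m+1}-\Delta T_{m+1}^c$ starts only at the correction orders recorded in Lemma~\ref{lem:dt-control} and Lemma~\ref{lem:DTestim}.

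Next I would substitute those two expansions and sort the resulting terms by their dependence on $\Vj_m$. Any term whose coefficient is $\mathcal{G}_m$-measurable — most importantly $\frac{b^T}{\lambda_0}m'(\Delta T_{m+1}^0)Z_{T_m}=\bigo(\eps^\delta\theta_{m+1})$, which is independent of $\Vj_m$ — yields, after multiplication by $\Vj_m$, a mean-zero martingale increment. Any term \emph{linear} in $\Vj_m$ — namely $\eps\Delta T_{m+1}^1$ and the control correction $\eps\frac{\theta_{m+1}}{\lambda_0^2}A_\eps^T(\Xi_m^c)\Vj_m$ — produces, after conditioning, a genuine drift through $\E\pare{\Vj_m\otimes\Vj_m}=D$. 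Here the identity $\E\pare{m(\theta_{m+1}/\lambda_0)}=\frac{1}{\lambda_0}\frac{\taueps}{\Id+\lambda_0\taueps}$ of \eqref{eq:av_m} is decisive: it makes the two drifts assemble into the single feedback term $\frac{\eps^2}{\lambda_0^2}D\pare{A_\eps(\Xi_m)-A_\eps(\Xi_m^c)}$, which by the uniform-in-$\eps$ Lipschitz continuity of $A_\eps$ (from boundedness of $(\lambda_0\Id+\taueps^{-1})^{-1}=\frac{\taueps}{\Id+\lambda_0\taueps}$, guaranteed by Assumption~\ref{a:2}, and $\norm{\nabla^2 S}_\infty=c_S$) is bounded by $C\eps^2|U_m|$. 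The leftover drifts are $\bigo(\eps^{2+\delta})$ together with $\bigo(\eps^3+\eps^2\nl(\eps))$ per step.

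I would then bound the two pieces over $n\le n_\eps=\bigo(\eps^{-2})$. For the martingale $M_n$, Doob's maximal inequality and Burkholder--Davis--Gundy reduce the $L^p$ estimate to the quadratic variation (with Minkowski's inequality for the $L^{p/2}$ norm of the sum of squares, and all moments of $\theta$ being finite). Its dominant increments are $\bigo(\eps^{1+\delta}\theta_{m+1})$ and $\bigo(\eps^2\theta_{m+1}^2)$, so the quadratic variation is $\bigo\pare{n_\eps(\eps^{2+2\delta}+\eps^4)}=\bigo(\eps^{2\delta}+\eps^2)$ and hence $\E\pare{\sup_n|M_n|^p}^{1/p}=\bigo(\eps^\delta+\eps)$. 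The leftover drifts sum to $n_\eps\cdot\bigo(\eps^{2+\delta}+\eps^3+\eps^2\nl(\eps))=\bigo(\eps^\delta+\eps+\nl(\eps))$. Writing $u_n:=\E\pare{\sup_{k\le n}|U_k|^p}^{1/p}$, these estimates give $u_n\le C\eps^2\sum_{m<n}u_m+\bigo(\eps^\delta+\eps+\nl(\eps))$, and the discrete Gronwall lemma closes the argument because the amplification factor $e^{C\eps^2 n_\eps}$ is $\bigo(1)$.

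The main obstacle is precisely this bookkeeping: ensuring that no contribution survives the summation over the $\bigo(\eps^{-2})$ jumps at an order larger than $\eps^\delta+\eps+\nl(\eps)$, which forces a clean separation between drift (tolerable only at $\bigo(\eps^{2+\delta})$ per step, given the $\eps^{-2}$ factor) and martingale fluctuations (where the $\sqrt{n_\eps}$ scaling is favorable). The most delicate term is the \emph{quadratic-in-$\theta$} fluctuation of $\eps\Delta T_{m+1}^1$: since $m(\theta/\lambda_0)=\bigo(\theta^2)$ has order-one variance, it superficially threatens an $\bigo(1)$ contribution, but the explicit $\eps$ in $\eps\Delta T^1$ combined with the global prefactor $\eps$ in $U_n$ makes each increment $\bigo(\eps^2\theta_{m+1}^2)$, whence the harmless $\sqrt{n_\eps}\,\eps^2=\bigo(\eps)$. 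Tracking these powers of $\eps$ correctly, deciding which remainders are martingale increments rather than drifts, and making sure that only the \emph{difference} $A_\eps(\Xi_m)-A_\eps(\Xi_m^c)$ (and not the individual order-one drifts) enters the Gronwall feedback, are the crux; note that working with Lemma~\ref{lem:DTestim} rather than the sharper Lemma~\ref{lem:DTestimbis} is what produces the error $\nl(\eps)$ (rather than $\nlbis(\eps)$) in the final bound.
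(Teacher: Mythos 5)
Your proposal is correct and follows essentially the same route as the paper's proof: the same decomposition of $\Xi_n-\Xi_n^c$ into a Lipschitz feedback term of size $C\eps^2\abs{\Xi_m-\Xi_m^c}$ (via the identity \eqref{eq:av_m} combining the two jump-time expansions of Lemmas~\ref{lem:dt-control} and~\ref{lem:DTestim}), a martingale part handled by Doob/Burkholder--Davis--Gundy with dominant increments $\bigo(\eps^{1+\delta})$, and a summable remainder of order $\eps^3+\eps^2\nl(\eps)$ per step, closed by a discrete Gronwall argument. The only cosmetic difference is that you pass to conditional expectations (replacing $\Vj_m\otimes\Vj_m$ by $D$) to isolate the drift, whereas the paper keeps the pathwise term $\alpha_{m+1}$; both are bounded by $C\eps^2\abs{\Xi_m-\Xi_m^c}$ and lead to the same estimate.
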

\begin{proof}
  First, using the estimates of jump times~\eqref{eq:deltaTc}-\eqref{eq:deltaT}, we can decompose the differences of positions of both processes as follows~:
  \begin{align*}
    \Xi_{n}-\Xi_{n}^c  &= \sum_{m=0}^{n-1}\eps (\Delta T _ {m+1}-\Delta T _ {m+1} ^c) \Vj_{m} \\
& = \sum_{m=0}^{n-1} \alpha_{m+1} + d_{m+1} + r_{m+1},
  \end{align*}
where, by definition,
\begin{equation*}
  \begin{cases}
    \alpha_{m+1} := \eps^2 \frac{\Vj_m}{\lambda_0^2} \pare{A_\eps(\Xi_{m})^T-A_\eps(\Xi_{m}^c)^T } \Vj_m, \\
       d_{m+1} :=\eps^2\frac{\Vj_m}{\lambda_0^2} \pare{1-\theta_{m+1}}A_\eps(\Xi_{m}^c)^T  \Vj_m \\
\hspace{1.5cm} + \eps^2 \frac{\Vj_{m}}{\lambda_0^2} \pare{ \lambda_0  b^T m\pare{\frac{\theta_{m+1}}{\lambda_0}} \nabla S(\Xi_m) - A_\eps(\Xi_{m})^T }  \Vj_{m} \\
\hspace{1.5cm} + \eps b^T m'(\Delta T_{m+1}^0 ) Z_{T_m} \Vj_m , \\
r_{m+1}:= \pare{\theta_{m+1}^6+\theta_{n+1}} \bigo(\eps^{3} + \eps^2 \nl(\eps) ).
     \end{cases}
  \end{equation*}

Since $A_\eps$ is Lipschitz, we have that $\alpha_{m+1} \leq C \eps^2 \abs{\Xi_{m}-\Xi_{m}^c}$, and we can write 
\begin{equation*}
  \abs{\Xi_{n}-\Xi_{n}^c} \leq  C\eps^2 \sum_{m=0}^{n-1} \abs{\Xi_{m}-\Xi_{m}^c} + \sup_{0 \leq l \leq n_{\eps}}\abs{\sum_{m=1}^{l} d_{m}}+ \sum_{m=1}^{n_\eps}\abs{r_{m}},
\end{equation*}
for $n \leq n_{\eps}$, which yields
\begin{align}
  \abs{\Xi_{n}-\Xi_{n}^c} &\leq \pare{\sup_{0 \leq l \leq n_{\eps}}\abs{\sum_{m=1}^{l} d_{m}}+ \sum_{m=1}^{n_\eps}\abs{r_{m}}} \pare{1+C \eps^2+ \dots + (C \eps^2)^{n_\eps}}, \label{eq:key_xi}.
\end{align}
Note that
\[
\pare{1+C \eps^2+ \dots + (C \eps^2)^{n_\eps}} \leq  C .
\]
Now, we can remark that the discrete time process
$
\pare{M_l}_{l \geq 0}:=\pare{ \sum_{m=1}^{l} d_{m} }_{l \geq 0}
$ is a martingale with respect to the filtration $\mathcal F _{l} = \sigma\pare{(\theta_{m},\Vj_{m-1}) \vert 1 \leq m \leq l }$ with $M_0=0$, so that we can apply the Burkholder-Davies-Gundy upper bound (which is a simple consequence of Doob maximal inequality here, see~\cite{Karatzasbook}), for some $p \geq 1$:
\begin{equation}
  \label{eq:BDG}
  \E \pare{\sup_{0 \leq l \leq n_\eps } \abs{M_l}^p} \leq C_p n_{\eps}^{p/2 - 1} \sum_{l=1}^{n_{\eps}} \E \pare{\abs{M_l-M_{l-1}}^p}.
\end{equation}
In the present case, since $\abs{Z_{T_n}}=\bigo(\eps^\delta)$:
\[
\E \pare{\abs{d_l}^p} \leq C_p \eps^{p(1+\delta)}.
\]
Moreover,
\begin{align*}
  \E \pare{\abs{\sum_{m=1}^{n_\eps}\abs{r_{m}}}^p} &\leq n_{\eps}^{p - 1} \sum_{m=1}^{n_\eps}\E \abs{r_{m}}^p \\
& \leq C \pare{ \eps + \nl(\eps) }^p,
\end{align*}
and the result follows from~\eqref{eq:key_xi}.
\end{proof}
In the same way, we can estimate the difference of the $n_\eps$-th jump time of both processes~:
\begin{lem}
  Let $n_\eps \geq 1$ a deterministic integer. Then 
  \begin{equation}
    \label{eq:estim_jumps_T}
    \eps \, \E \pare{ \sup_{0 \leq n \leq n_\eps}\abs{T_n-T_n^c}^p }^{1/p} \leq C_p \pare{ \eps^\delta + \nlbis(\eps)  } 
  \end{equation}
\end{lem}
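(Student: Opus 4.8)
The plan is to mirror the previous lemma, writing $T_n - T_n^c = \sum_{m=0}^{n-1}(\Delta T_{m+1} - \Delta T_{m+1}^c)$ and controlling the partial sums uniformly in $n \le n_\eps$, with $n_\eps = \bigo(\eps^{-2})$ as in the preceding estimate. The essential difference lies in the choice of expansion: I would expand $\Delta T_{m+1}$ using the \emph{bis} form of Lemma~\ref{lem:DTestimbis} rather than Lemma~\ref{lem:DTestim}, and $\Delta T_{m+1}^c$ using Lemma~\ref{lem:dt-control}. After the leading terms $\theta_{m+1}/\lambda_0$ cancel, the increment $\Delta T_{m+1}-\Delta T_{m+1}^c$ splits into three pieces: a telescoping term $-\frac{b^T\taueps}{\lambda_0}(Z_{T_{m+1}} - Z_{T_m})$, a martingale increment $d_{m+1} := \eps\pare{\Delta T_{m+1}^0\, b^T\taueps\nabla S(X_{T_m}) - \frac{\theta_{m+1}}{\lambda_0^2}A_\eps^T(X_{T_m^c}^c)}\Vj_m$, and a remainder $r_{m+1} := (\theta_{m+1}^3 + \theta_{m+1})\bigo(\eps\,\nlbis(\eps) + \eps^{1+\delta})$.

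The telescoping term is precisely the reason the bis expansion is required, and this is the main obstacle of the proof. Had I used Lemma~\ref{lem:DTestim}, the increment would contain the non-centered drift $\frac{b^T}{\lambda_0}m'(\Delta T_{m+1}^0)Z_{T_m}$ of per-step size $\bigo(\eps^\delta)$, which upon summation over $n_\eps \sim \eps^{-2}$ jumps would accumulate to $\bigo(\eps^{\delta-2})$ and, even after the outer factor $\eps$, would still blow up like $\eps^{\delta-1}$. The bis form instead repackages this contribution as the \emph{telescoping} difference $Z_{T_{m+1}} - Z_{T_m}$, so that $\sum_{m=0}^{n-1}$ collapses to $Z_{T_n} - Z_{T_0}$; using $\sup_t\abs{Z_t} = \bigo(\eps^\delta)$ together with $\norm{\taueps} = \bigo(\eps^{\delta-1})$, its supremum over $n$ is bounded by $\bigo(\eps^{2\delta-1})$, which contributes $\bigo(\eps^{2\delta}) = \bigo(\eps^\delta)$ after multiplication by the outer $\eps$.

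For the martingale part, I would check that $d_{m+1}$ is an increment of a martingale with respect to $\mathcal F_l = \sigma((\theta_m,\Vj_{m-1})\,|\,1\le m\le l)$, exactly as in the previous lemma: $\Delta T_{m+1}^0$ depends only on $\theta_{m+1}$ and $Z_{T_m}$ and all coefficients are $\mathcal F_m$-measurable, so the factor $\Vj_m$, being centered and independent of $(\mathcal F_m,\theta_{m+1})$, forces $\E(d_{m+1}\mid\mathcal F_m)=0$. Each increment satisfies $\E\abs{d_{m+1}}^p \le C_p\eps^{p\delta}$, the dominant contribution being $\eps\,\Delta T_{m+1}^0\, b^T\taueps$ of size $\eps\cdot\eps^{\delta-1}$ since $A_\eps$ is bounded. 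Applying the Burkholder--Davis--Gundy bound~\eqref{eq:BDG} with $n_\eps = \bigo(\eps^{-2})$ gives $\E\pare{\sup_{n\le n_\eps}\abs{\sum_m d_m}^p}^{1/p} \le C_p n_\eps^{1/2}\eps^\delta = \bigo(\eps^{\delta-1})$, so this term contributes $\bigo(\eps^\delta)$ after the outer $\eps$.

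Finally, the remainder is summed in absolute value: using Minkowski's inequality and the bounded moments of $\theta_{m+1}$, one obtains $\E\pare{\sum_{m=1}^{n_\eps}\abs{r_m}}^p \le n_\eps^p\,\bigo(\eps\,\nlbis(\eps)+\eps^{1+\delta})^p$, giving $\bigo(\eps^{-1}\nlbis(\eps)+\eps^{\delta-1})$ and hence $\bigo(\nlbis(\eps)+\eps^\delta)$ after the factor $\eps$. Collecting the three contributions yields the claimed estimate $\eps\,\E\pare{\sup_{n\le n_\eps}\abs{T_n-T_n^c}^p}^{1/p}\le C_p(\eps^\delta+\nlbis(\eps))$. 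Once the telescoping structure forced by the bis expansion is identified, the martingale and remainder estimates are routine repetitions of the previous lemma.
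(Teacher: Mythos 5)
Your proposal is correct and follows essentially the same route as the paper's proof: the same decomposition of $\Delta T_{m+1}-\Delta T_{m+1}^c$ into the telescoping term $-\frac{b^T\taueps}{\lambda_0}(Z_{T_{m+1}}-Z_{T_m})$ (collapsing to a $\bigo(\eps^{2\delta-1})$ boundary term), the same martingale increment handled by the Burkholder--Davis--Gundy bound with $\E\abs{d_m}^p\leq C_p\eps^{p\delta}$, and the same treatment of the remainder. Your explanation of \emph{why} the bis expansion of Lemma~\ref{lem:DTestimbis} is needed (to avoid the non-centered $\bigo(\eps^\delta)$ per-step drift accumulating to $\bigo(\eps^{\delta-1})$ after the outer factor of $\eps$) is exactly the point of that lemma, even though the paper does not spell it out.
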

\begin{proof}
   First, using the estimates of jump times~\eqref{eq:deltaTc}-\eqref{eq:deltaTbis}, we can decompose the differences of jump times of both processes as follows~:
  \begin{align*}
    T_{n}-T_{n}^c  &= \sum_{m=0}^{n-1} \Delta T _ {m+1}-\Delta T _ {m+1} ^c\\
& = \sum_{m=0}^{n-1} \delta_{m+1} + d_{m+1} + r_{m+1},
  \end{align*}
where, by definition,
\begin{equation*}
  \begin{cases}
    \delta_{m+1} := - \frac{b^T \taueps}{\lambda_0}(Z_{T_{m+1}}-Z_{T_{m}}), \\
       d_{m+1} :=\eps \Delta T _{0 } b^T \taueps \nabla S (X_{T_n}) \Vj_n \\
\hspace{1.5cm} - \eps \frac{\theta_{n+1}}{\lambda_0^2} A_\eps^T(X_{T^c_n}^c) \Vj_n  \\
r_{m+1}:= \pare{\theta_{m+1}^3+ \theta_{n+1} } \eps \bigo(\eps^\delta + \nlbis(\eps)).
     \end{cases}
  \end{equation*}
We can write for $n \leq n_{\eps}$:
\begin{align*}
  \abs{T_{n}-T_{n}^c} \leq \abs{\frac{b^T \taueps}{\lambda_0}(Z_{T_{n}}-Z_{T_{0}}) } + \sup_{0 \leq l \leq n_{\eps}}\abs{\sum_{m=1}^{l} d_{m}} + \sum_{m=1}^{n_\eps}\abs{r_{m}}.
\end{align*}
Now, we can remark that the discrete time process
$
\pare{M_l}_{l \geq 0}:=\pare{ \sum_{m=1}^{l} d_{m} }_{l \geq 0}
$ is a martingale with respect to the filtration $\mathcal F _{l} = \sigma\pare{(\theta_{m},\Vj_{m-1}) \vert 1 \leq m \leq l }$ with $M_0=0$. Using the Burkholder-Davies-Gundy inequality~\eqref{eq:BDG}, together with:
\[
\E \pare{\abs{m_{m}}^p} \leq C \eps^{p\delta},
\]
yields
\[
\E \pare{ \sup_{0 \leq l \leq n_{\eps}}\abs{\sum_{m=1}^{l} d_{m}}^p  }^{1/p} \leq C \eps^{\delta-1}.
\]
Using:
\[
\abs{\frac{b^T \taueps}{\lambda_0}(Z_{T_{n}}-Z_{T_{0}}) } = \bigo(\eps^{2 \delta-1}) < \bigo(\eps^{\delta-1}),
\]
and plugging in the rest term, we finally get~\eqref{eq:estim_jumps_T}.
\end{proof}

We can now conclude with the proof of Theorem~\ref{thm:coupling}~:

\begin{proof}[Proof of Theorem~\ref{thm:coupling}]
We consider a given integer $n_{\eps} \geq 1$. We decompose the difference between both processes as 
\begin{equation}\label{eq:proof1}
  \abs{ X^\eps_{\bt}- X^{c,\eps}_{\bt}} = \abs{ X^\eps_{\bt}- X^{c,\eps}_{\bt}} \one_{N\leq n_{\eps}}+\abs{ X^\eps_{\bt}- X^{c,\eps}_{\bt}} \one_{N > n_{\eps}},
\end{equation}
and use~\eqref{eq:coupl_11} to write 
\begin{equation}
  \label{eq:proof2}
  \abs{ X^\eps_{\bt}- X^{c,\eps}_{\bt}} \one_{N\leq n_{\eps}} \leq \sup_{0 \leq n \leq n_\eps}\abs{\Xi_n-\Xi_n^c} +\eps \sup_{0 \leq n \leq n_\eps}\abs{T_n-T_n^c} + \pare{\theta_{N+1}+\theta_{N_c+1}} \bigo(\eps).
\end{equation}
Using now the estimates~\eqref{eq:estim_jumps} and \eqref{eq:estim_jumps_T} in~\eqref{eq:proof2}, we get
\begin{align}\label{eq:estim_jumps_tot}
  \E \pare{ \abs{ X^\eps_{\bt}- X^{c,\eps}_{\bt}}^p \one_{N \leq n_{\eps}} }^{1/p}& \leq C  ( \eps + \eps^\delta + \nlbis(\eps)  )\\
& \qquad + C\eps \E (\theta_{N}^p+\theta_{N^c}^p)^{1/p} \nonumber \\
& = \bigo( \eps + \eps^\delta +  \nlbis(\eps)     ),
\end{align}
where in the last line we have used a technical lemma (Lemma~\ref{lem:tech1}).

It remains to control the probability of the event $\{ N > n_\eps\}$. To this purpose, we consider $t \mapsto N_t \in \mathbb N$ the Poisson process uniquely defined by:
\[
\sum_{n=1}^{N_t} \theta_n \leq t < \sum_{n=1}^{N_t +1} \theta_n.
\]
By definition of $N$ and $N_c$, there is a constant $C$ such that $N_{C\bt / \eps^2} \geq N$ and $N_{C\bt / \eps^2} \geq N^c$, so that we can write:
\begin{equation}
  \label{eq:proof3}
  \abs{ X^\eps_{\bt}- X^{c,\eps}_{\bt}}  \leq \eps C \pare{\sum_{n=1}^{N_{C\bt / \eps^2}+1}\theta_{n}}\one_{N_{C\bt / \eps^2} > n_{\eps}}.
\end{equation}
Let us choose $n_{\eps}  \geq  4 \e C\bt / \eps^2 $, a choice that  satisfies $n_{\eps}=\bigo(\eps^{-2})$. A second technical lemma (Lemma~\ref{lem:tech2}) implies that
\begin{equation}
  \label{eq:proof4}
  \E \pare{ \abs{ X^\eps_{\bt}- X^{c,\eps}_{\bt}}^p \one_{N > n_{\eps}} } \leq \eps C_p 2^{-1/(C\eps^2)},
\end{equation}
so that the contribution of the latter to the final estimate~\eqref{eq:coupl_estim} is negligible, since it is exponentially small with respect to $\eps^{-2}$. The proof is complete.
\end{proof}

\subsection{Discussion about the sharpness of the coupling}\label{sec:sharp}
One may ask about the ``sharpness'' of the precise estimates of the coupling, given mainly by Theorem~\ref{thm:coupling}; (see also~\eqref{eq:estim_jumps} (position shift), and \eqref{eq:estim_jumps_T} (time shift)). Let us discuss the case where $c_F=0$, $ \delta \in [1/k , 1/(k-1)]$. In this case the dominant error term in the jump times expansion of the internal state model in Lemma~$4.8$ in~\cite{limits} is due to the non-linearity of the turning rate: $c_\lambda \eps^{k \delta -1}$. Since this term is due to the internal state mechanism, it can be conjectured that the difference between the density of the internal state model and gradient sensing model will be at least of this order. On the other hand, the coupling estimate in Theorem~\ref{thm:coupling} is controlled by the same term: $c_\lambda \eps^{k \delta -1}$ , suggesting the sharpness of the latter.

\section{Numerical illustration\label{sec:illustr}}

In this section, we  demonstrate the validity of the analysis above. For the numerical experiments, we restrict ourselves to one space dimension.
For the process with internal state, we use internal dynamics that are given by the scalar cartoon model \eqref{e:scalar-y}. In all experiments, $\tau$ is chosen independently of $\eps$ so that $\delta=1$. The corresponding turning rate is given by \eqref{eq:rate} with $\lambda_0=1$. For the control process, we choose the linear turning rate \eqref{eq:control_rate}, with parameters \eqref{eq:control_field} such that the two processes have the same diffusion limit; in particular, $b=1$. The physical domain is $x\in [0,20]$, and we use reflecting boundary conditions, {\it i.e}. the bacterial velocity is reversed when $x=0$ or $x=20$. We fix a scalar bimodal chemoattractant concentration field
\begin{equation}\label{eq:chemoattractant}
	S(x)=\alpha\left(\exp\left(-\beta\left(x-\xi\right)^2\right)+\exp\left(-\beta\left(x-\eta\right)^2\right)\right),
\end{equation}
in which $\alpha=5$, $\beta=1$, $\xi=7.5$ and $\eta=12.5$.

\paragraph{Single bacterium as a function of time. } In a first experiment, we simulate a single bacterium evolving according to the fine-scale model (\ref{eq:process_noscale}), in which we set the parameters to $\eps=0.2$ and $\tau=1$.  We compare this evolution to that of a bacterium that satisfies the corresponding control process \eqref{eq:cprocess}. Both simulations are performed using the same random numbers starting from the initial condition $X_0=X_0^c=8$ and $V_0=V_0^c=+1$; the bacterium with internal state starts from $Y_0=S(X_0)$. The time step $\delta t=0.1$. The results are shown in figure \ref{fig:one-particle}.
\begin{figure}
	\begin{center}
	\includegraphics[scale=0.8]{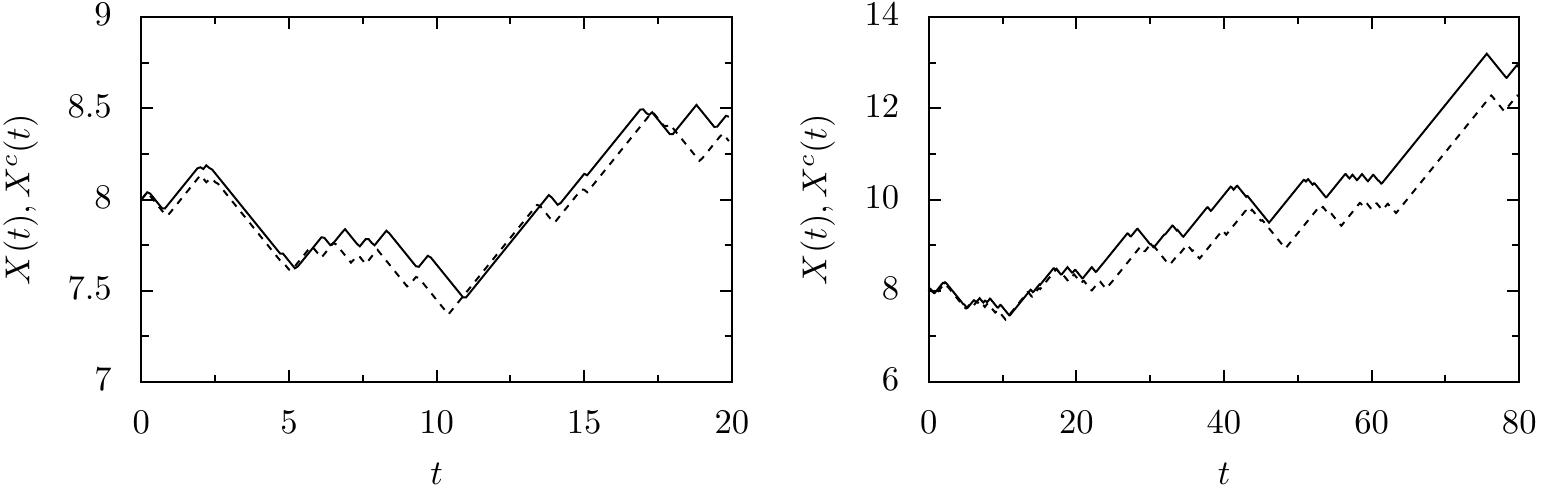}
	\caption{\label{fig:one-particle}Evolution of a single bacterium evolving according to the fine-scale process (\ref{eq:process_noscale}) (solid line) and the corresponding control process (\ref{eq:cprocess}) (dashed) on short (left) and long (right) time-scales.}
\end{center}
\end{figure}
We see a very good coupling initially, which degrades over time. Note the time shift in the short time picture. In the long time picture, coupling is completely lost at some point. 

\paragraph{Expectation and variance as a function of $\epsilon$ and $\bt$.} Next, we repeat the experiment using $N=10000$ particles and compute the empirical mean and variance of the coupling, i.e.,\ 
\[
E\left(\left|X_{\bt}-X_{\bt}^c\right|\right)=\frac{1}{N}\sum_i^N \left(\left|X^i_{\bt}-X^{i,c}_{\bt}\right|\right), \;\;\; \text{resp., }  \;\;\; E\left((X_{\bt}-X_{\bt}^c)^2\right)=\frac{1}{N}\sum_i^N \left(X^i_{\bt}-X^{i,c}_{\bt}\right)^2.
\]
As fine-scale parameters, we choose $\tau=1$, $\lambda_0=b=1$ and several values of $\eps$. The chemoattractant concentration is again given as \eqref{eq:chemoattractant}, now with $\alpha=\beta=1$, $\xi=7.5$ and $\eta=12.5$.  The time interval for the computation is $t\in [0,30/\eps^2]$.
\begin{figure}
	\begin{center}
	\includegraphics[scale=0.68]{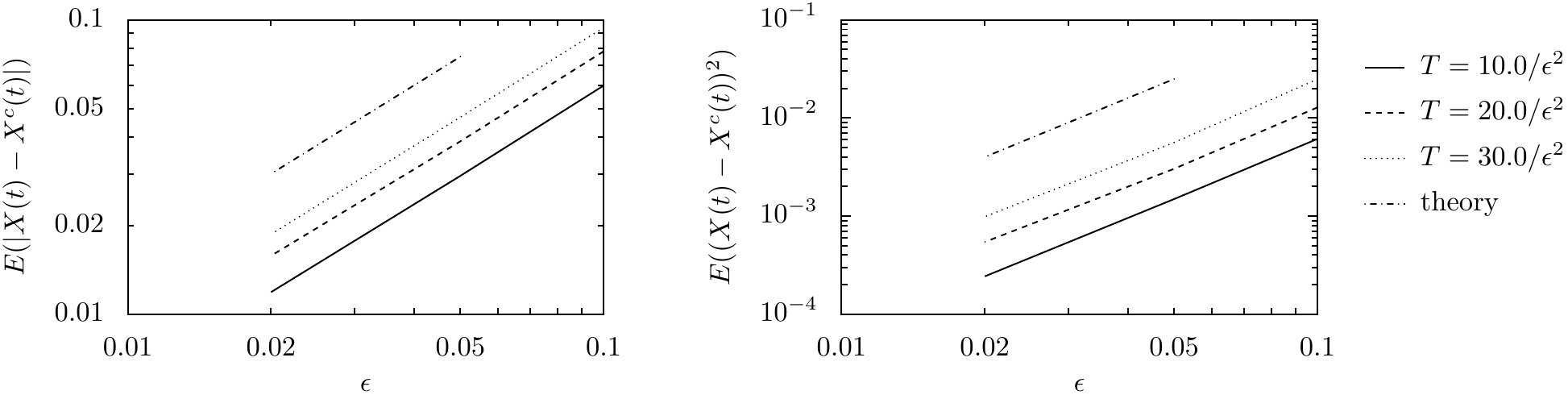}
	\caption{\label{fig:eps-long} Empirical mean (left) and variance (right) of the difference between the fine-scale process (\ref{eq:process_noscale}) and the corresponding control process (\ref{eq:cprocess}) as a function of $\eps$ for different values of the reporting time $T$.  The theoretical slope is indicated with a dashdotted line. The sample size is $N=10000$.}
\end{center}
\end{figure}
Figure \ref{fig:eps-long} shows the dependence in $\eps$ of the coupling, by plotting the empirical mean and variance defined above as a function of $\eps$ for different values of the reporting time $T$. The results shown in figure \ref{fig:eps-long} are in clear accordance with the theoretical slope predicted by the asymptotic analysis.
\begin{figure}
	\begin{center}
	\includegraphics[scale=0.68]{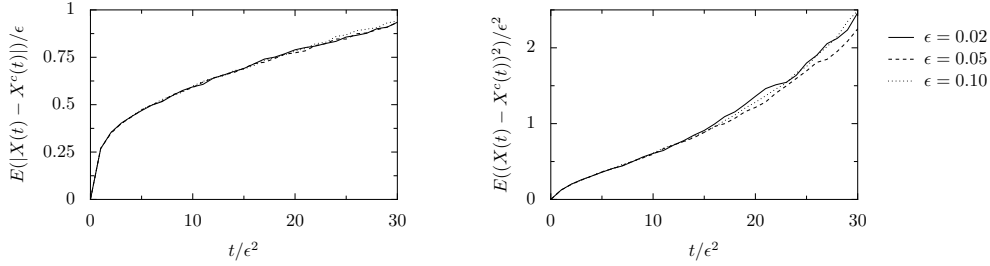}
	\caption{\label{fig:T-long}Evolution of the empirical mean (left) and variance (right) of the difference between the fine-scale process (\ref{eq:process_noscale}) and the corresponding control process (\ref{eq:cprocess}) as a function of $\bt=t/\eps^2$ for different values of $\eps$. The sample size is $N=10000$. }
\end{center}
\end{figure} 
Figure \ref{fig:T-long} shows the mean and variance of the coupling difference as a function of time.  The time dependence of the variance has not been analyzed mathematically in Section~\ref{sec:variance}; the specific behaviour viewed in figure \ref{fig:T-long} and is probably due to: (i) sufficiently short diffusive times; (ii) the specific double-well form of the chemoattractant potential.

\paragraph{Expectation and variance as a function of $\tau$.} Finally, in a last experiment, we illustrate the dependence on $\tau$. To this end, we again simulate $N=10000$ particles choosing $\lambda_0=1$, $\eps=0.1$ and $X_0=7.5$, for different values of $\tau$. The results in figure \ref{fig:tau-long} show that the variance quickly increases with $\tau$ until it reaches a plateau for $\tau>1$. 
\begin{figure}
	\begin{center}
	\includegraphics[scale=0.68]{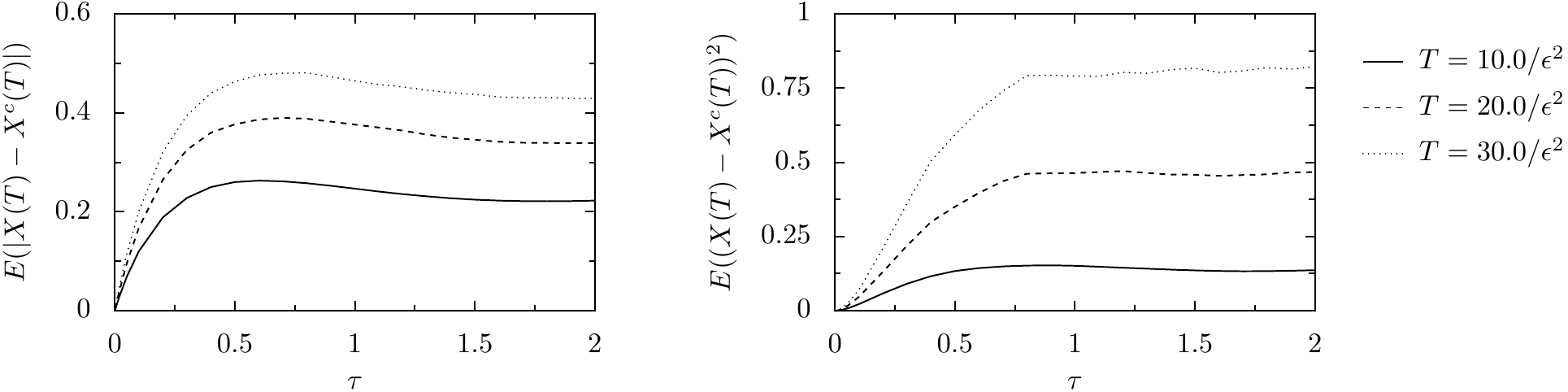}
	\caption{\label{fig:tau-long} Empirical mean (left) and variance (right) of the difference between the fine-scale process (\ref{eq:process_noscale}) and the corresponding control process (\ref{eq:cprocess}) as a function of $\tau$ for different values of the reporting time $T$. The sample size is $N=10000$.}
\end{center}
\end{figure}

\section{Simulation on diffusive time scales\label{sec:appl}}

In this section, we consider a simulation of the density of an ensemble of particles, with and without variance reduction and/or reinitialization, as described in Section~\ref{sec:coupl_princ}. We again restrict ourselves to one space dimension, with domain $x\in [0,20]$ and periodic boundary conditions. In this case, the kinetic equation corresponding to the control process reduces to the system 
\begin{equation} \label{eq:cprocess_hydro1d}
\system{
	 & \partial_t  p^c_++\epsilon\partial_x p^c_+=-\frac{\lambda^c(x,+1)}{2}p_+^c+ \frac{\lambda^c(x,-1)}{2}p_-^c &\\
	 & \partial_t  p^c_--\epsilon\partial_x p^c_-=\frac{\lambda^c(x,+1)}{2}p_+^c -\frac{\lambda^c(x,-1)}{2}p_-^c &  
	 }.
 \end{equation}
of two PDEs, which is straightforward to simulate using finite differences.

We fix the chemoattractant concentration field as \eqref{eq:chemoattractant}, with parameters $\alpha=2$, $\beta=1$, $\xi=7.5$ and $\eta=12.5$. For the internal dynamics, the same model (\eqref{e:scalar-y}-\eqref{eq:rate}, \eqref{eq:control_rate}-\eqref{eq:control_field}) is used. The parameters are $\eps=0.5$, $\lambda_0=1$, $\tau=1$, $\delta t=0.1$. 

All simulations are performed with $N=5000$ particles. The initial positions are uniformly distributed in the interval $x\in[13,15]$; the initial velocities are chosen uniformly, i.e., each particle has an equal probability of having an initial velocity of $\pm\eps$. The initial condition for the internal variable is chosen to be in local equilibrium, i.e., $Y_0^i=S(X^i_0)$. The initial positions and velocities of the control particles are chosen to be identical. 

We discretize the continuum description \eqref{eq:cprocess_hydro1d} on a mesh with $\Delta x = 0.1$ using a third-order upwind-biased scheme, and perform time integration using the standard fourth order Runge--Kutta method with time step $\delta t_{pde}= 10^{-1}$. The initial condition is given as 
\begin{equation}
	p^+(x,0)=p^-(x,0)=\begin{cases}
	0.25 , & {x\in [13,15],} \\
	0, & \text{otherwise.}
\end{cases}
\end{equation}

\paragraph{Simulation without variance reduction.} First, we simulate both stochastic processes up to time $\bar{t}=50$ ($t=50/\eps^2$) and estimate the density of each of these processes $\hat{n}_N(x,\bt)$, resp.~$\hat{n}^c_N(x,\bt)$, without variance reduction.  The density is obtained via binning in a histogram, in which the grid points of the deterministic simulation are the centers of the bins. Figure \ref{fig:no-variance-reduction} (left)
\begin{figure}
	\begin{center}
	\includegraphics[scale=0.8]{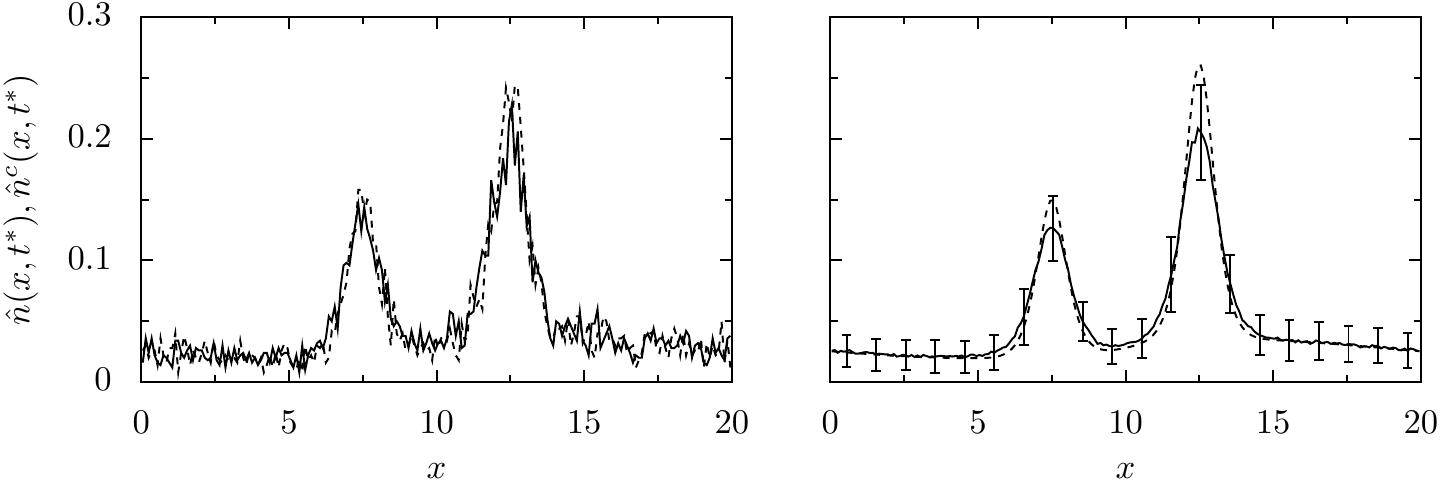}
	\caption{\label{fig:no-variance-reduction} Bacterial density as a function of space at $t=50/\eps^2$ without variance reduction. Left: one realization. Right: mean over $100$ realizations and $95\%$ confidence interval. The solid line is the estimated density from a particle simulation using the process with internal state; the dashed line is estimated from a particle simulation using the control process. Both used $N=5000$ particles. The dotted line is the solution of the deterministic PDE (\ref{eq:cdensity}).   }
\end{center}
\end{figure}
shows the results for a single realization.  We see that, given the fluctuations on the obtained density, it is impossible to conclude on differences between the two models.  This observation is confirmed by computing the average density of both processes over $100$ realizations.  The mean densities are shown in figure \ref{fig:no-variance-reduction} (right), which also reveals that the mean density of the control process is within the $95\%$ confidence interval of the process with internal state.  Both figures also show the density that is computed using the continuum description, which coincides with the mean of the density of the control particles. 

\paragraph{Simulation with variance reduction.} Next, we compare the variance reduced estimation \eqref{eq:var_reduced_estimation} with the density of the control PDE. We reinitialize the control particles after each coarse-scale step, i.e., each $k$ steps of the particle scheme, where $k \delta t = \delta t_{pde}$, (here $k=1$). The results are shown in figure \ref{fig:var-red-reinit}.
\begin{figure}
	\begin{center}
	\includegraphics[scale=0.8]{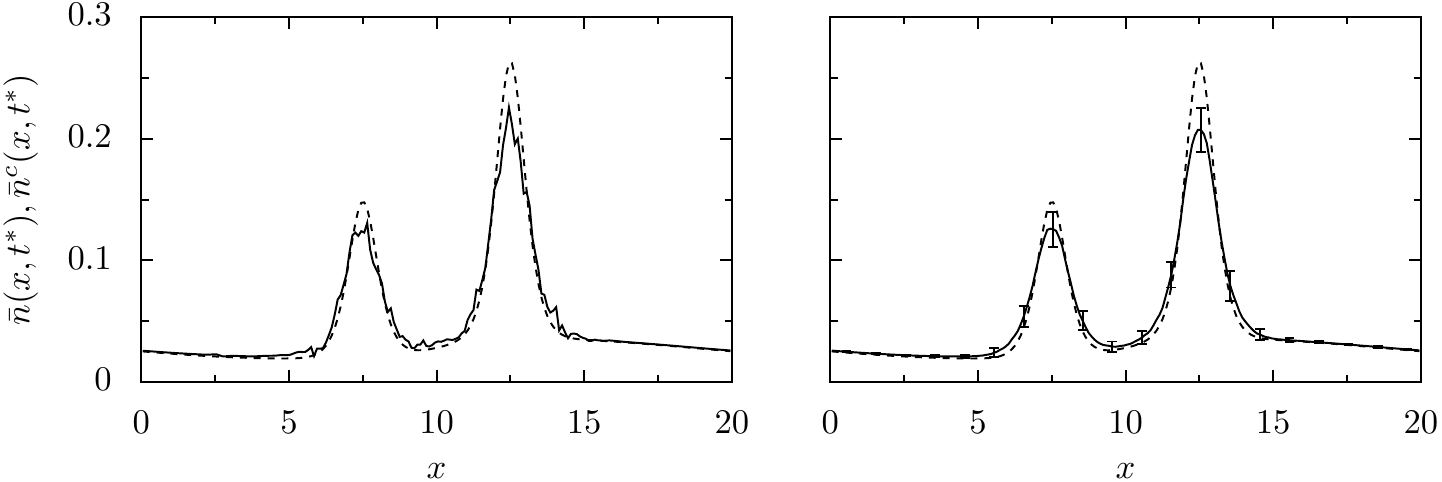}
	\caption{\label{fig:var-red-reinit} Bacterial density as a function of space at $t=50/\eps^2$ with variance reduction and reinitialization. Left: variance reduced density estimation of one realization with $N=5000$ particles (solid) and deterministic solution for the control process (\ref{eq:cdensity}) (dashed). Right: mean over $100$ realization and $95\%$ confidence interval (solid) and the deterministic solution for the control process (\ref{eq:cdensity}) (dashed).}
\end{center}
\end{figure}
We see that, using this reinitialization, the difference between the behaviour of the two processes is visually clear from one realization (left figure).  Also, the resulting variance is such that the density of the control PDE is no longer within the $95\%$ confidence interval of the variance reduced density estimation (right figure). We see that there is a significant difference between both models: the density corresponding to the control process is more peaked, indicating that bacteria that follow the control process are more sensitive to sudden changes in chemoattractant gradient. This difference can be interpreted from the fact that the bacteria with internal state do not adjust themselves instantaneously to their environment, but instead with a time constant $\tau$.

\paragraph{Simulation with variance reduction but without reinitialization. } We also compare the variance reduced estimation \eqref{eq:var_reduced_estimation} with the density of the control PDE without performing any reinitialization of the control process to restore the coupling.
\begin{figure}
	\begin{center}
	\includegraphics[scale=0.8]{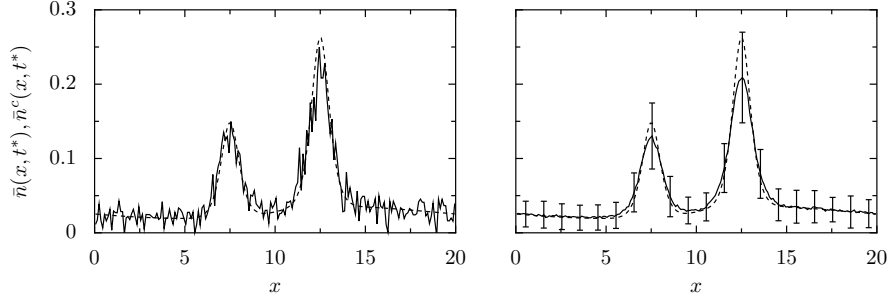}
	\caption{\label{fig:var-red-noreinit} Bacterial density as a function of space at $t=50/\eps^2$ with variance reduction. Left: variance reduced density estimation of one realization with $N=5000$ particles (solid) and deterministic solution for the control process (\ref{eq:cdensity}) (dashed). Right: mean over $200$ realization and $95\%$ confidence interval (solid) and the deterministic solution for the control process (\ref{eq:cdensity}) (dashed).}
\end{center}
\end{figure}
Figure \ref{fig:var-red-noreinit} (right) shows the mean estimation of the density over $200$ realizations, as well as the density of the control process. As evidenced by the error bars, simulating a single realization of the process, even with variance reduction, is not able to reliably reveal this difference. This phenomenon is due to the degeneracy of the coupling on long diffusive times. This is also illustrated in figure \ref{fig:var-red-noreinit} (left), which compares the variance reduced density estimation of a single realization with the density of the control PDE. Note that, as predicted by the analysis, the variance is much larger in regions where $\nabla S(x)$ is large.

Finally, we repeat the experiment with a larger chemoattractant gradient. We again choose the chemoattractant field as \eqref{eq:chemoattractant}, with the same parameters as above, except that we now take $\alpha=5$. We also use the same discretization parameters as above. The variance reduced estimation \eqref{eq:var_reduced_estimation}, as well as the density of the control PDE, are shown in figure \ref{fig:var-red-reinit-alpha5}.
\begin{figure}
	\begin{center}
	\includegraphics[scale=0.8]{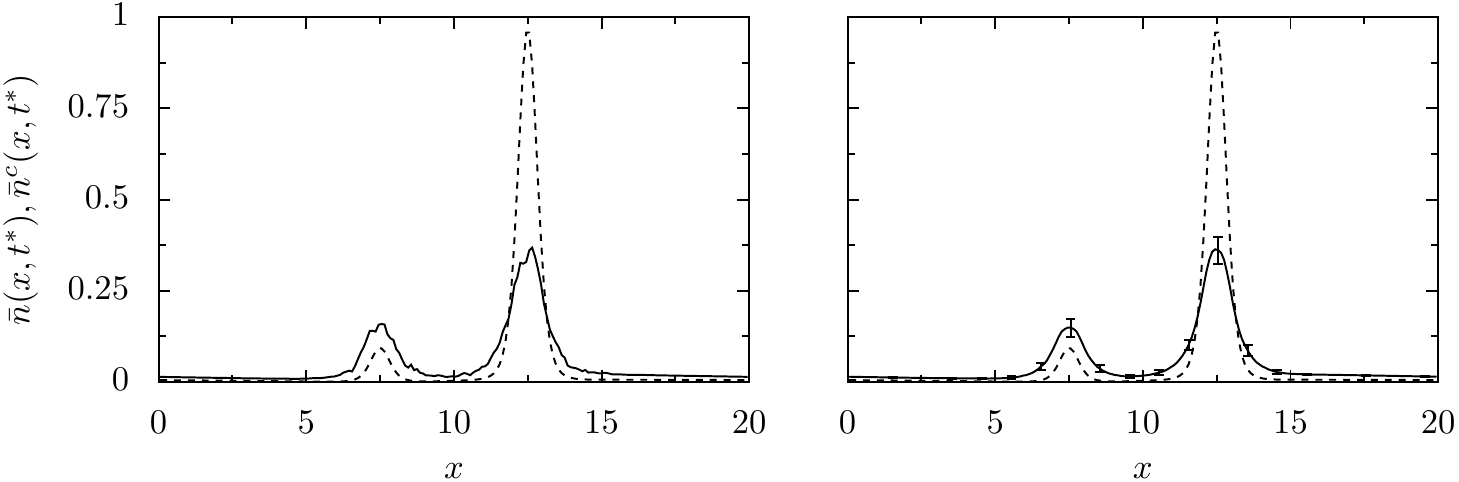}
	\caption{\label{fig:var-red-reinit-alpha5} Bacterial density as a function of space at $t=50/\eps^2$ with variance reduction and reinitialization. Left: variance reduced density estimation of one realization with $N=5000$ particles (solid) and deterministic solution for the control process (\ref{eq:cdensity}) (dashed). Chemoattractant given by \eqref{eq:chemoattractant} with $\alpha=5$. Right: mean over $100$ realization and $95\%$ confidence interval (solid) and the deterministic solution for the control process (\ref{eq:cdensity}) (dashed).}
\end{center}
\end{figure}
We see that, although the difference between the process with internal state and the control process becomes much larger, the variance is still very well controlled by the coupling.

\section{Conclusions and discussion\label{sec:concl}}

In this paper, we studied the simulation of stochastic individual-based models for chemotaxis of bacteria with internal state. We have used a coupling with a simpler, direct gradient sensing model with the same diffusion limit to obtain an ``asymptotic'' variance reduction.  Throughout this work, we have assumed that the computation of the bacterial density using the kinetic description of the direct gradient sensing model can be performed accurately with a grid-based method. Note that in higher spatial dimensions, this may become cumbersome, and the grid-based computation should be carried out at the level of the advection-diffusion description (or any desired moment system) only, using a second level of coupling between the velocity-jump process associated with the kinetic description, and the stochastic differential equation associated with the advection-diffusion limit. This is left for future work.

The current algorithm allows to explore the differences between the fine-scale process with internal state and the simpler, coarse process \emph{using a fixed number of particles, independently of the small parameter $\eps$}.  However, the computational cost to simulated an individual particle over diffusive time-scales becomes very large for $\eps\to 0$.  In future work, we will therefore also study the development of truly ``asymptotic preserving'' schemes in the diffusion asymptotics, in the sense that the computational cost of the simulation of processes is independent of the small parameter $\eps$. 
This will require to deal with two kinds of difficulties: (i)
 We will need to use an asymptotic preserving method to solve the density evolution of the control model on the grid; and (ii) We will need to extrapolate forward in time the state of the fine-scale simulation. 

The first difficulty implies that, instead of solving the full kinetic equation associated with the control process, we only solve its diffusion limit. This may imply the use of a second level of variance reduction, coupling the control velocity jump process and its limiting drift-diffusion process.

The second difficulty, extrapolation in time, is related to the equation-free \cite{KevrGearHymKevrRunTheo03,Kevrekidis:2009p7484} and HMM \cite{EEng03,E:2007p3747} types of methodologies; ideas of this type can be traced back to Erhenfest \cite{Ehrenfest:1990p9192}. One approach is to use a \emph{coarse projective integration} method \cite{GearKevrTheo02,KevrGearHymKevrRunTheo03}.  One then extrapolates the bacterial density over a projective time step on diffusive time scales, after which the projected density needs to be \emph{lifted} to an ensemble of individual bacteria.  For such methods, besides the effect of extrapolation on the variance of the obtained results, also the effects of reconstructing the velocities and internal variables have to be systematically studied. 

\section*{Acknowledgements}

The authors thank Radek Erban, Thierry Goudon, Yannis Kevrekidis and Tony Leli\`evre for interesting discussions that eventually led to this work.  
This work was performed during a research stay of GS at SIMPAF (INRIA - Lille). GS warmly thanks the whole SIMPAF team for its hospitality. GS is a Postdoctoral Fellow of the 
Research Foundation -- Flanders. 
This work was partially supported by the Research Foundation -- Flanders
through Research Project G.0130.03 and by the Interuniversity
Attraction Poles Programme of the Belgian Science Policy Office through grant
IUAP/V/22 (GS). The scientific responsibility rests with its authors.

\bibliographystyle{plain}
\bibliography{bib-papers}

\appendix

\section{Technical lemmas, used in proof of Theorem~\ref{thm:coupling}}

\begin{lem}[Technical lemma]\label{lem:tech1}
  Let $(\theta_n)_{n\geq 1}$ a sequence of independent exponential random variables with mean $1$, and $(T_{n})_{n \geq 1}$ a strictly increasing sequence of random times with $T_0=0$ such that:
  \begin{itemize}
  \item For any $m \geq 1$, the sequence $(\theta_n)_{n\geq m+1}$ is independent of the past $(\theta_n,T_n)_{n\leq m}$.
    \item There is a constant $C$ such that for all $n\geq 0 $, 
      \begin{equation}
        \label{eq:h1}
        \frac{1}{C} \theta_{n+1} \leq T_{n+1} - T_n \leq C \theta_{n+1} .
      \end{equation}
  \end{itemize}
Let $N_t$ a random integer such that:
\[
T_{N_t} \leq t \leq T_{N_t +1}.
\]
Then for any integer $ k \geq 0$, there is a constant $C_k$ independent of $t$ such that:
\begin{equation*}
  \E \pare{ \theta_{N_t+1}  }^k \leq C_k.
\end{equation*}
\end{lem}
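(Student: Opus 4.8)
The plan is to decompose the expectation according to the value of $N_t$ and to exploit that, although the increment $\theta_{N_t+1}$ straddling the deterministic time $t$ is size-biased (the inspection paradox), it still has exponentially light tails; a renewal-density estimate will then control the resulting series uniformly in $t$. First I would write
\[
\E\pare{\theta_{N_t+1}^k}=\sum_{n\geq 0}\E\pare{\theta_{n+1}^k\,\mathbf 1_{\{T_n\leq t<T_{n+1}\}}},
\]
and note that on $\{T_n\le t<T_{n+1}\}$ the upper bound in~\eqref{eq:h1} gives $t-T_n<T_{n+1}-T_n\le C\theta_{n+1}$, hence $\theta_{n+1}>(t-T_n)/C$. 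Since the independence hypothesis (with $m=n$) makes $\theta_{n+1}$ a standard exponential independent of $\mathcal F_n:=\sigma\pare{(\theta_m,T_m)_{m\le n}}$, while $T_n$ is $\mathcal F_n$-measurable, conditioning on $\mathcal F_n$ yields
\[
\E\pare{\theta_{N_t+1}^k}\leq\sum_{n\geq 0}\E\pare{\mathbf 1_{\{T_n\le t\}}\,g_k\!\pare{\tfrac{t-T_n}{C}}},\qquad g_k(u):=\int_u^\infty x^k\e^{-x}\,\d x .
\]

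Next I would use the crude tail bound $g_k(u)\le C_k\,\e^{-u/2}$, which follows from $x^k\e^{-x}\le C_k\,\e^{-x/2}$, so that with $\beta:=1/(2C)$ everything reduces to showing that
\[
H(t):=\E\pare{\sum_{n:\,T_n\le t}\e^{-\beta(t-T_n)}}
\]
stays bounded uniformly in $t$. Grouping the epochs by the backward unit window $t-T_n\in[j,j+1)$ in which they fall, one gets $H(t)\le\sum_{j\ge 0}\e^{-\beta j}R^\ast=R^\ast/(1-\e^{-\beta})$, provided I can prove the single uniform renewal estimate
\[
\E\pare{\#\set{n:\,T_n\in(a,a+1]}}\le R^\ast\qquad\text{for all }a\ge 0,
\]
with $R^\ast$ independent of $a$.

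This renewal estimate is the crux, and is where the argument becomes delicate. The plan is to introduce the first-passage index $\nu:=\min\set{n:T_n>a}$, which is a stopping time for $(\mathcal F_n)$, and to bound the count by $1$ (the epoch $T_\nu$) plus the number of later epochs still below $a+1$. For each counted $n>\nu$ the \emph{lower} bound in~\eqref{eq:h1} gives $\sum_{i=\nu+1}^{n}\theta_i\le C\,(T_n-T_\nu)<C$, so the count is dominated by $1+K$, where $K$ counts the partial sums of $(\theta_{\nu+i})_{i\ge 1}$ that remain below $C$. Applying the independence hypothesis at the stopping time $\nu$—conditionally on $\{\nu=n\}$ the sequence $(\theta_{n+i})_{i\ge 1}$ is i.i.d.\ standard exponential and independent of the past—identifies $K$ with the number of points before time $C$ of a rate-one Poisson process, whence $\E(K)=C$ and $R^\ast=1+C$ works. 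The main obstacle is precisely this last point: since $\nu$ is random and strongly correlated with the history, one must route the exponential renewal bound through the stopping-time (strong Markov) structure rather than through a naive i.i.d.\ renewal argument; once this is in place, the tail estimate on $g_k$ and the geometric summation are routine, and both inequalities in~\eqref{eq:h1} are seen to be genuinely used (the upper one in the first step, the lower one in the renewal count).
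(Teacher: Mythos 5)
Your proof is correct, and it rests on the same two pillars as the paper's: the upper bound in \eqref{eq:h1} plus independence of $\theta_{n+1}$ from the past forces the straddling increment into the exponential tail beyond $(t-T_n)/C$, and the lower bound in \eqref{eq:h1} plus independence yields a uniform bound on the expected number of epochs per unit interval. The execution of the second pillar is genuinely different, though. The paper partitions $[0,t]$ into $P\approx t$ forward windows $I_p$ and bounds $\sum_n\Proba(T_n\in I_p)$ by a first-entrance decomposition, iterating an identity over run lengths, passing to a limit, and dominating each conditional run probability by $\Proba\pare{\theta_1+\dots+\theta_m\le C\abs{I_p}}$; summing that series gives $1+C\abs{I_p}$, i.e.\ exactly your $R^\ast=1+C$. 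You instead keep the continuous weight $g_k\pare{(t-T_n)/C}$, group epochs into backward unit windows, and get the renewal count directly by stopping at the first passage $\nu$ over the left endpoint and dominating the subsequent count by a rate-one Poisson variable run for time $C$. Your route is cleaner: it avoids the paper's iterated decomposition and limiting argument, and the conditioning on $\set{\nu=n}$ is legitimate precisely because this event lies in $\sigma\pare{(\theta_m,T_m)_{m\le n}}$, from which the hypothesis guarantees independence of $(\theta_{n+i})_{i\ge 1}$. The only loose ends are cosmetic: the backward windows with $j$ close to $t$ reach below $0$, where $T_0=0$ must be counted (the same bound $1+C$ holds with $\nu=0$), and the a.s.\ finiteness of $\nu$ should be noted, which follows from $T_n\ge\frac1C\sum_{i\le n}\theta_i\to\infty$.
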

\begin{proof}
  The result may seem rather intuitive, but the proof is slightly tricky. First, we consider sub-intervals of $[0,t]$ of the form:
  \begin{equation*}
    I_p = [\frac{p-1}{P} t, \frac{p}{P} t],
  \end{equation*}
for some $P\geq 1$ and $p = 1, \dots , P$. 

Step (i). The first step consists in proving the following estimate:
\begin{equation}\label{eq:app1}
  \E \pare{ \theta_{N_t+1} ^k } \leq \sum_{p=0}^{P-1} \E\pare{\theta^k \one \pare{ \theta \geq \frac{p}{C} \frac{t}{P}  } } \sup_{p=1 \dots P} \sum_{n=0}^{+\infty} \Proba (T_n \in I_p)
\end{equation}
Indeed,
\begin{align*}
  \E \pare{ \theta_{N_t+1} ^k } & = \sum_{n=0}^{+ \infty} \E \pare{ \theta^k_{n+1} \one \pare{ T_{n} \leq t \leq T_{n +1} } } \\
 & \leq  \sum_{n=0}^{+ \infty} \sum_{p=1}^P \E \pare{ \theta^k_{n+1} \one \pare{ T_{n +1} -T_n \geq \frac{P-p}{P} t  \vert T_n \in I_p } }\Proba \pare{T_n \in I_p}.
\end{align*}
Using the independence of $\theta_{n+1}$ with $T_n$, and the assumption~\eqref{eq:h1}, it yields,
\begin{equation*}
  \E \pare{ \theta^k_{n+1} \one \pare{ T_{n +1} -T_n \geq \frac{P-p}{P} t \big \vert T_n \in I_p } }\leq \E \pare{ \theta^k \one_{ \pare{ \theta \geq \frac{P-p}{C P} t } } }.
\end{equation*}
and finally changing the sum index $p$ to $P-p+1$ yields~\eqref{eq:app1}. 

Step (ii). The second step consists in the following decomposition. 
Let $I=[a,b]$ be a finite interval of $\R^+$, $T_{-1}=-\infty$. Then,
\begin{equation}
  \label{eq:app2}
  \sum_{n=0}^{+\infty} \Proba (T_n \in I) = \sum_{n=0}^{+\infty} \pare{ \Proba(  T_n \in I, T_{n-1} \notin I  ) \sum_{m=0}^{+\infty} \Proba( T_{n+1},\dots,T_{n+m} \in I \vert T_n \in I, T_{n-1} \notin I  )  } .
\end{equation}
The key is to write
\begin{align*}
  \underbrace{\sum_{n=0}^{+\infty} \Proba (T_n \in I)}_{(a_0)} & = \sum_{n=0}^{+\infty} \Proba (T_{n-1} \notin I,T_n \in I) + \sum_{n=0}^{+\infty} \Proba (T_{n-1},T_n \in I) \\
& = \sum_{n=0}^{+\infty} \Proba (T_{n-1} \notin I,T_n \in I) + \underbrace{\sum_{n=0}^{+\infty} \Proba (T_{n} ,T_{n+1} \in I)}_{(a_1)} .
\end{align*}
We wish to iterate the decomposition of $(a_0)$ to $(a_1)$, $(a_2)$, etc... For this purpose, remark that
\begin{align*}
  (a_m) &= \sum_{n=0}^{+\infty} \Proba (T_{n}, \dots ,T_{n+m} \in I) \\
& = \sum_{n=0}^{+\infty} \Proba (T_{n} \in I ) \Proba (T_{n+1}, \dots ,T_{n+m} \in I \vert T_{n} \in I )  \\
& \leq \pare{ \sum_{n=0}^{+\infty} \Proba (T_{n} \in I ) } \Proba ( \theta_1+ \dots + \theta_m \leq C (b-a) ) \\
& \xrightarrow{m \to + \infty} 0 ,
\end{align*}
so that we get in the end
\begin{equation*}
(a_0) = \sum_{n=0}^{+\infty} \sum_{m=0}^{+\infty} \Proba (T_{n-1} \notin I,T_n, \dots T_{n+m} \in I).
\end{equation*}
Factorizing $\Proba(  T_n \in I, T_{n-1} \notin I  ) $ with Bayes' formula yields~\eqref{eq:app2}.

Step (iii). The estimate~\eqref{eq:app1} yields
\begin{equation}
  \label{eq:app4}
  \E \pare{ \theta_{N_t+1} ^k } \leq \sum_{p=0}^{+\infty} \E\pare{\theta^k \one_{ \pare{ \theta \geq \frac{p}{C} \frac{t}{P}  } } } \sum_{m=0}^{+\infty} \Proba\pare{\theta_1+\dots +\theta_{m} \leq \frac{t}{CP}  }.
\end{equation}
Indeed for $I=[a,b]$, using the independence of $(\theta_{n+1}, \dots,\theta_{n+m})$ from $(T_n,T_{n-1})$:
\[
\sum_{m=0}^{+\infty} \Proba( T_{n+1},\dots,T_{n+m} \in I \vert T_n \in I, T_{n-1} \notin I  )   \leq \sum_{m=0}^{+\infty} \Proba\pare{ \theta_{1} + \dots + \theta_{m} \leq C(b-a) },
\]
and
\[
\sum_{n=0}^{+\infty} \Proba(  T_n \in I, T_{n-1} \notin I  ) = \Proba(\exists n \geq 0 \vert T_n \in I  ) \leq 1.
\]
The two last estimates in~\eqref{eq:app2} yields:
\[
\sup_{p=1 \dots P} \sum_{n=0}^{+\infty} \Proba (T_n \in I_p) \leq \sum_{m=0}^{+\infty} \Proba\pare{\theta_1+\dots +\theta_{m} \leq \frac{t}{CP}  },
\]
and~\eqref{eq:app4} follows. Finally, we choose $P= \lfloor t +1\rfloor$ so that the right hand side of~\eqref{eq:app4} does not depend on $t$ any more. Finally we use the exponential convergence of the terms of the two series in the right hand side of~\eqref{eq:app4} (using for instnace large deviation estimates for the second) one, to conclude that the two series indeed converge, and conclude the proof.
\end{proof}

\begin{lem}[Technical lemma]\label{lem:tech2}
  Let $\pare{\theta_n}_{n \geq 0}$ be a sequence of exponential random variables of mean $1$, and $t \mapsto N_t \in \mathbb N$ the Poisson process uniquely defined by:
\[
\sum_{n=1}^{N_t} \theta_n \leq t < \sum_{n=1}^{N_t +1} \theta_n.
\]
Let $n_0 \in \mathbb N$ such that $n_0 \geq 4 \e t$, $p_1 \geq 1$, and $p_2 \geq 1$. Then there is a constant $C_{p_1,p_2}$ independent of $(n_0,t)$ such that:
\[
\E \pare{ \abs{\sum_{n=1}^{N_t+1}\theta_n^{p_1} }^{p_2} \one_{N_t > n_0}} \leq C_{p_1,p_2} 2^{- n_0} .
\]
\end{lem}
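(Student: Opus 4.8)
The plan is to split the expectation into a moment factor and a small-probability factor by Cauchy--Schwarz, and to exploit that $\{N_t>n_0\}$ is a large-deviation event. Writing $A:=\{N_t>n_0\}$ and $S:=\sum_{n=1}^{N_t+1}\theta_n^{p_1}$, Cauchy--Schwarz gives $\E(\abs{S}^{p_2}\one_A)\leq \E(\abs{S}^{2p_2})^{1/2}\,\Proba(A)^{1/2}$. The whole point is that $\Proba(A)$ decays exponentially fast in both $t$ and $n_0$, fast enough to absorb a merely polynomial growth of the moment factor $\E(\abs{S}^{2p_2})^{1/2}$ in $t$; all quantities being nonnegative, the absolute values are cosmetic.

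For the probability factor I would use the identity $\{N_t>n_0\}=\{\sum_{n=1}^{n_0+1}\theta_n\leq t\}$ together with a Chernoff bound: for every $s>0$, $\Proba(\sum_{n=1}^{n_0+1}\theta_n\leq t)\leq \e^{st}(1+s)^{-(n_0+1)}$. Optimising in $s$, i.e.\ taking $s=(n_0+1)/t-1>0$ (admissible precisely because $n_0+1>4\e t>t$), yields $\Proba(A)\leq \e^{-t}\pare{\e t/(n_0+1)}^{n_0+1}$. Since $n_0+1>4\e t$ forces $\e t/(n_0+1)<1/4$, this gives $\Proba(A)\leq \e^{-t}4^{-(n_0+1)}$, hence $\Proba(A)^{1/2}\leq \e^{-t/2}2^{-(n_0+1)}$. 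It is essential to \emph{retain} the factor $\e^{-t}$ rather than discard it, as it is what will later cancel the polynomial growth of the moment factor.

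For the moment factor I would bound $\E(\abs{S}^{2p_2})$ by a polynomial in $t$. Split $S=\sum_{n=1}^{N_t}\theta_n^{p_1}+\theta_{N_t+1}^{p_1}$. On the first sum, the defining constraint $\sum_{n=1}^{N_t}\theta_n\leq t$ forces $\theta_n\leq t$ for every $n\leq N_t$; since $p_1\geq 1$ this gives $\theta_n^{p_1}\leq t^{p_1-1}\theta_n$, whence the \emph{deterministic} bound $\sum_{n=1}^{N_t}\theta_n^{p_1}\leq t^{p_1-1}\sum_{n=1}^{N_t}\theta_n\leq t^{p_1}$. For the overshoot term, Lemma~\ref{lem:tech1} (applied to $T_n=\sum_{k=1}^n\theta_k$, for which hypothesis~\eqref{eq:h1} holds with $C=1$) gives $\E(\theta_{N_t+1}^{k})\leq C_k$ uniformly in $t$ for every $k$. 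Combining the two via $(a+b)^q\leq 2^{q-1}(a^q+b^q)$ with $q=2p_2$ gives $\E(\abs{S}^{2p_2})\leq C(1+t^{2p_1p_2})$, so $\E(\abs{S}^{2p_2})^{1/2}\leq C(1+t^{p_1p_2})$.

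Multiplying the two estimates yields $\E(\abs{S}^{p_2}\one_A)\leq C(1+t^{p_1p_2})\,\e^{-t/2}\,2^{-(n_0+1)}$, and since $\sup_{t\geq 0}(1+t^{p_1p_2})\e^{-t/2}=:C_{p_1,p_2}<\infty$ is finite and independent of $n_0$ and $t$, the claimed bound $\E(\abs{S}^{p_2}\one_A)\leq C_{p_1,p_2}2^{-n_0}$ follows. The delicate point, and the step I expect to require the most care, is the interplay of the two factors: a naive estimate $\Proba(A)\leq 4^{-(n_0+1)}$ discarding $\e^{-t}$ would leave an uncontrolled polynomial $(1+t^{p_1p_2})$ (equivalently $(1+n_0^{p_1p_2})$ after $t\leq n_0/(4\e)$) multiplying $2^{-n_0}$; keeping $\e^{-t}$ in the Chernoff bound is exactly what renders the moment factor harmless.
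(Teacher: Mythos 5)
Your proof is correct, and it takes a genuinely different route from the paper's. The paper conditions on the event $\{N_t=m\}$, uses the representation of the jump times as $t$ times the order statistics of $m$ i.i.d.\ uniforms, computes the moments of the spacings exactly via Beta integrals ($\E_{t,m}\pare{(U_{(n)}-U_{(n-1)})^{p_1p_2}}=m!\,(p_1p_2)!/(m+p_1p_2)!$), and then sums the resulting series using $\sum_{m>n_0}t^m/m!\leq t^{n_0}/n_0!$ and Stirling's formula. You instead decouple the two difficulties by Cauchy--Schwarz: a Chernoff bound with the optimal tilt $s=(n_0+1)/t-1$ gives $\Proba(N_t>n_0)\leq \e^{-t}\pare{\e t/(n_0+1)}^{n_0+1}\leq \e^{-t}4^{-(n_0+1)}$, and the moment factor is tamed by the nice deterministic observation that $\sum_{n=1}^{N_t}\theta_n^{p_1}\leq t^{p_1-1}\sum_{n=1}^{N_t}\theta_n\leq t^{p_1}$ (each $\theta_n$ with $n\leq N_t$ is at most $t$), with the overshoot $\theta_{N_t+1}$ handled by Lemma~\ref{lem:tech1}, which the paper already provides and whose hypotheses hold here with $C=1$. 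You correctly identify the one delicate point: the factor $\e^{-t}$ from the Chernoff bound must be retained to absorb the polynomial $1+t^{p_1p_2}$ coming from the moment factor. Your argument is shorter, avoids all order-statistics computations, and extends verbatim to non-integer $p_1,p_2\geq 1$ (the paper's explicit formulas implicitly treat $p_1p_2$ as an integer); what the paper's computation buys in exchange is an exact, self-contained evaluation of the conditional moments that does not rely on Lemma~\ref{lem:tech1}. The only cosmetic caveat is that the optimization requires $t>0$, but for $t=0$ the left-hand side vanishes, so the statement is trivial there.
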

\begin{proof}

Step (i). \\
  Let us condition on the different possible values of $N_t > n_0$:
  \begin{align*}
   \E \pare{ \abs{\sum_{n=1}^{N_t+1}\theta_n^{p_1} }^{p_2} \one_{N_t > n_0}} & = \sum_{m=n_0+1}^{+\infty}\E \pare{ \abs{\sum_{n=1}^{N_t+1}\theta_n^{p_1} }^{p_2} \Big \vert N_t = m } \Proba \pare{N_t = m}  \\
& = \sum_{m=n_0+1}^{+\infty}\E \pare{ \abs{\sum_{n=1}^{N_t+1}\theta_n^{p_1} }^{p_2} \Big \vert N_t = m } \e^{-t}\frac{t^{m}}{{m} ! } .
  \end{align*}

Step (ii). \\
 Let us denote $(\Proba_{t,m},\E_{t,m}) = (\Proba\pare{\, . \, \vert N_t = m},\E\pare{\, . \, \vert N_t = m})$ the probability/expectation conditionally on the event $\set{N_t = m}$. Conditionally on $\set{N_t = m}$, the sequence $(\theta_1, \dots,\theta_{m+1})$ has the same distribution as $(t U_{(1)}, t U_{(2)}-t U_{(1)}, \dots, t U_{(m)}- t U_{(m-1)}, t- t U_{(m)}+\theta)$, where $(U_{(1)} < \dots < U_{(m)})$ is the order statistics of $m$ independent random variables $(U_1, \dots , U_m)$ uniformly distributed on the interval $[0,1]$, and $\theta$ is an independent exponential random variable of mean~$1$ (see Section~2.4 of~\cite{Nor98} for classical properties of Poisson processes). Thus denoting $U_{(0)}=0$ we can write:
\begin{align}\label{eq:app11}
 &  \E \pare{ \abs{\sum_{n=1}^{N_t+1}\theta_n^{p_1} }^{p_2} \one_{N_t > n_0}} = \nonumber\\ 
& \qquad \sum_{m=n_0+1}^{+\infty}\E_{t,m} \pare{ \abs{ \pare{t- t U_{(m)}+\theta}^{p_1} + t^{p_1}\sum_{n=1}^{m} \pare{U_{(n)}-U_{(n-1)}}^{p_1} }^{p_2} } \e^{-t}\frac{t^{m}}{{m} ! } .
  \end{align}
Using the inequality $(a+b)^p \leq C_p a^p + b^b$ for any $a,b > 0$, Jensen inequality, and denoting $U_{(m+1)}= 1$, there is a constant $C_{p_1,p_2}$ such that:
\begin{align}
  \label{eq:app12}
 & \abs{ \pare{t- t U_{(m)}+\theta}^{p_1} + t^{p_1}\sum_{n=1}^{m} \pare{U_{(n)}-U_{(n-1)}}^{p_1} }^{p_2} \leq \nonumber \\
& \quad C_{p_1,p2}\pare{\theta^{p_2p_1} + t^{p_1 p_2}(m+1)^{p_2-1} \sum_{n=1}^{m+1} \pare{U_{(n)}-U_{(n-1)}}^{p_1 p_2} }.
\end{align}

Step (iii). \\
Let us compute $\E_{t,m} \pare{ \pare{U_{(n)}-U_{(n-1)}}^{p_1 p_2}  }$ for $n=1, \dots, m+1$. First, let us recall the probability density of a couple $(U_{(i)},U_{(j)})$ for $1 \leq i<j \leq m$ in an order statistics of size $m$:
\begin{align}\label{eq:orderstat}
&  \Proba \pare{U_{(i)}\in [u_i,u_i+d u_i], U_{(j)} \in [u_j,u_j + d u_j] }  \nonumber \\
& \qquad  = m!  \Proba \pare{U_1 < \dots < U_{i-1} < u_{i}} \times \Proba \pare{ U_{i} \in [u_{i},u_{i}+d u_{i}] } \nonumber \\
& \qquad \quad \times \Proba \pare{ u_i < U_{i+1} < \dots <  U_{j-1}<u_{j}} \times \Proba \pare{ U_{j} \in [u_{j},u_{j}+d u_{j}]} \times \Proba \pare{ u_j < U_{j+1} < \dots <  U_m} , \nonumber \\
& \qquad = m! \frac{u_{i}^{i-1}}{(i-1)!} \frac{(u_j-u_i)^{j-i-1}}{(j-i-1)!} \frac{(1-u_j)^{m-j}}{(m-j)!} d u_{i} \, d u_{j} .
\end{align}
As a consequence,
\begin{align*}
  \E_{t,m} \pare{ \pare{U_{(n)}-U_{(n-1)}}^{p_1 p_2}  } & = \int_{0< u_{n-1} < u_n < 1} m! \frac{u_{n-1}^{n-2}}{(n-2)!} (u_n - u_{n-1} )^{p_1 p_2} \frac{(1-u_n)^{m-n}}{(m-n)!} d u_{n-1} \, d u_{n} \\
& = m! (p_1 p_2 ) ! \int_{0< u_{n-1} < u_n < 1} \frac{u_{n-1}^{n-2}}{(n-2)!} \frac{(u_n - u_{n-1} )^{p_1 p_2}}{(p_1 p_2 ) !} \frac{(1-u_n)^{m-n}}{(m-n)!} d u_{n-1} \, d u_{n}  \\
& = \frac{m! (p_1 p_2 ) !}{( m+p_1p_2    )!},
\end{align*}
where in the last line we have used~\eqref{eq:orderstat} with appropriate $(i,j,m)$ to compute the integral. In the same way, the probability density of $U_{(i)}$ for $1 \leq i \leq m$ in an order statistics of size $m$ is given by:
\[
\Proba \pare{U_{(i)}\in [u_i,u_i+d u_i]}=m! \frac{u_{i}^{i-1}}{(i-1)!} \frac{(1-u_i)^{m-i}}{(m-i)!} d u_i,
\]
so that
\begin{align*}
  \E_{t,m} \pare{ U_{(1)} ^{p_1 p_2} } & = m! (p_1 p_2 ) ! \int_{0< u_{1} < 1} \frac{u_{1}^{p_1 p_2}}{(p_1 p_2)!} \frac{(1-u_1)^{m-1}}{(m-1)!} d u_1 \\
 & = \frac{m! (p_1 p_2 ) !}{( m+p_1p_2    )!},
\end{align*}
and in the same way,
\[
 \E_{t,m} \pare{ (1-U_{(m)})^{p_1 p_2} } = \frac{m! (p_1 p_2 ) !}{( m+p_1p_2    )!} .
\]

Step (iv). \\
Remarking that $\E_m(\theta^{p_1p_2})=(p_1p_2)!$, and inserting the computations of Step (ii) in~\eqref{eq:app11}-\eqref{eq:app12} yields:
\begin{align*}
 &  \E \pare{ \abs{\sum_{n=1}^{N_t+1}\theta_n^{p_1} }^{p_2} \one_{N_t > n_0}} \leq \nonumber\\ 
& \qquad C_{p_1p_2}\sum_{m=n_0+1}^{+\infty} \pare{ (p_1p_2)! + t^{p_1 p_2}(m+1)^{p_2-1}  \frac{m! (p_1 p_2 ) !}{( m+p_1p_2    )!}   }\e^{-t}\frac{t^{m}}{{m} ! } .
\end{align*}
Using the inequalities $ \frac{(m+1)^{p_2-1} m !}{( m+p_1p_2    )!} \leq 1$, $\e^{-t}\leq 1$ and
\[
\sum_{m=n_0+1}^{+\infty} \frac{t^{m}}{{m} ! } \leq \frac{t^{n_0}}{ n_0 ! },
\]
it yields 
\[
\E \pare{ \abs{\sum_{n=1}^{N_t+1}\theta_n^{p_1} }^{p_2} \one_{N_t > n_0}}  \leq C_{p_1p_2} \frac{t^{n_0+p_1p_2}}{n_0 !},
\]
so that in the end, using the assumption $n_0 \geq 4 \e t$ and the Stirling formula,
\begin{align*}
 &  \E \pare{ \abs{\sum_{n=1}^{N_t+1}\theta_n^{p_1} }^{p_2} \one_{N_t > n_0}} \leq C_{p_1p_2}   (\frac{n_0}{4 \e })^{n_0+p_1p_2} \frac{1}{\sqrt{n_0}} (\frac{\e }{n_0})^{n_0} \\
& \qquad \leq C_{p_1p_2} \frac{n_0^{p_1 p_2}}{\sqrt{n_0} 2 ^{n_0}} 2^{- n _0} \leq  C_{p_1p_2} 2^{- n _0} ,
\end{align*}
which yields the result.
\end{proof}

\end{document}